\documentclass[12pt]{amsart}
\usepackage{amssymb} % SRC Specials: DVI [Inverse] Search

\newtheorem{thm}{Theorem}[section]
\newtheorem{cor}[thm]{Corollary}
\newtheorem{lem}[thm]{Lemma}
\newtheorem{prop}[thm]{Proposition}
\newtheorem{exam}[thm]{Example}

\theoremstyle{definition}
\newtheorem{defn}[thm]{Definition}
\newtheorem{rem}[thm]{Remark}

\numberwithin{equation}{section}

\usepackage[colorlinks]{hyperref}
\begin{document}
\title{ On the Order of Products of Coprime Elements in Finite Groups}
\author{M. Amiri}
\address{Universidade Federal de Uberlândia, Faculdade de Matemática, 38408-902 Uberlândia-MG, Brazil}
\email{m.amiri77@gmail.com}
\author{I. Lima$^\ast$}
\address{Universidade de Bras\'ilia, Departamento de Matem\'atica, 70910-900 Bras\'ilia-DF, Brazil}
\email{igor.matematico@gmail.com}
\thanks{The authors were partially supported by FAPDF, Brazil. The third author was partially supported by CAPES, Brazil.}
\author{S. Sousa}
\thanks{$^*$Corresponding author}
\address{Universidade de Bras\'ilia, Departamento de Matem\'atica, 70910-900 Bras\'ilia-DF, Brazil}
\email{sharmenya.andrade@gmail.com}
%\footnotetext{E-mail Address: {\tt m.amiri77@gmail.com;\,  	 igor.matematico@gmail.com;\, sharmenya.andrade@gmail.com } }

\date{}
\maketitle

%------------------------------------------------------------------------------------------------------------------------------
\begin{quote}
{\small \hfill{\rule{13.3cm}{.1mm}\hskip2cm} \textbf{Abstract.} In this work, we introduce the subgroups $D_m(G)$ and $D_{m,n}(G)$, defined in terms of the orders of products of coprime elements in a finite group $G$. We show that both subgroups are characteristic, that $D_{m,n}(G)$ is always nilpotent, and that their nilpotent structure provides a characterization of Frobenius group decompositions. 
Furthermore, we define the $E$-series, which extends this framework to the study of an important class of solvable groups of Fitting height at most $4$. We prove that a finite group $G$ has an $E$-series of length at most $4$ if and only if there exists a characteristic subgroup $F \leq G$ such that $G/F$ is nilpotent and $F$ is either nilpotent, a Frobenius group, or a $2$-Frobenius group.

%In \cite{More} A. Moretó and A. Sáez have recently proven that $\pi(o(xy)) = \pi(o(x)o(y))$ for every $x, y\in G$ of prime
%power order with $gcd(o(x), o(y)) = 1$  if, and only if,  $G$ is a nilpotent finite group. 
%Let $m$ and $n$ be two co-prime positive integers. 
 %In this paper, among other results,  we prove that for 
%a finite group $G$,  $\pi(o(xy))\subseteq \pi(o(y))$ for every
%$m$-element $x$ and $n$-element $y\neq 1$ if, and only if, $G$  is a Frobenius group with a complement of order $n$. 
%\vspace{1mm} {\renewcommand{\baselinestretch}{1}
%\parskip = 10 mm
{
\noindent{\small {\it \bf 2020 MSC}\,: 20F50, 20F18.}}\\
\noindent{\small {\it \bf Keywords}\,: Element orders; Nilpotent subgroups; Frobenius groups.}}\\
\vspace{-3mm}\hfill{\rule{13.3cm}{.1mm}\hskip2cm}
\end{quote}
%________________________________________________________________________________________________________________________________________

\section{Introduction}

Proposed in 1902, the Burnside problem asks if a periodic and finitely generated group is necessarily finite. In 1964, Golod and Shafarevich provided a negative answer to this question. However, this investigation is not finished. The problem has a lot of variants, many of them unsolved to this day. This shows how the study of orders of elements has a central role in group theory. %Como exemplo, para $m,n$ inteiros positivos, define-se o grupo livre de Burnside $B(m,n)$ como o maior grupo de $m$ geradores e expoente $n$. O segundo problema de Burnside consiste de exibir quais grupos $B(m,n)$ são finitos. Alguns avanços nessa direção foram dados por Novikov e Adian em 1968, mas exemplos simples como a finitude (ou não) de $B(2,5)$ ainda não foi solucionada.
%. É amplamente conhecido que quando dois elementos comutam entre si, a ordem do produto entre eles é o mínimo múltiplo comum de suas ordens. Além disso, em um grupo nilpotente, a ordem do produto entre dois elementos de ordens coprimas é exatamente o produto das ordens. 

%Por exemplo, Bonciocat \cite{Cip} introduz em seu trabalho o conceito de ordem mútua entre dois elementos. Para $x,y \in G$, define-se a ordem mútua de $x$ e $y$, denotada por $o(x,y)$, como o menor inteiro positivo tal que $x^ny^n = 1$ e em seu trabalho, o autor compara este número com a ordem do produto $xy$ em grupos nilpotentes finitos. Ele mostra, dentre em outras coisas, que através da classe de nilpotência de $G$ é possível determinar a relação entre $o(xy)$ e $o(x,y)$ para quaisquer dois elementos $x,y$, mostrando que quando a classe de nilpotência é suficientemente pequena, então esses dois números coincidem em todo o grupo. 

 J. Thompson \cite{Thom} showed that a finite group $G$ is non-solvable if and only if there are no elements $x,y,z \in G$ with coprime orders with $xyz=1$. Equivalently, Thompson also proved that $G$ is non-solvable if and only if there exist elements $x,y$ with coprime orders such that $o(xy)$ is coprime with $o(x)o(y)$. 
 
Recently, R.M. Guralnick and P.H. Tiep \cite{Tiep} improved this result by replacing the condition of coprime orders by elements with distinct prime power orders, expanding the applicability of the characterization. Baumslag and Wiegold \cite{Baumslag} showed that a finite group $G$ is nilpotent if and only if $o(xy)=o(x)o(y)$ for every $x,y \in G$ with $\gcd(o(x),o(y))=1$. In this same way, A.~Moretó and A.~Sáez \cite{More} established an analogous characterization for nilpotent groups in terms of the prime numbers that divide the order of the products:
\begin{thm}
     Let $G$ be a finite group. The following assertions are equivalent:
     \begin{enumerate}
         \item[(i)] $G$ is nilpotent.
         \item[(ii)] For every $x,y\in G$ with prime power order, with $\gcd(o(x), o(y)) = 1$, it holds 
         $$
         \pi(o(x)o(y))= \pi(o(xy)).
         $$
     \end{enumerate}
\end{thm}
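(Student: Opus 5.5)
The plan is to prove the two implications separately. The forward direction is elementary. The converse reduces, via Schmidt's theorem on minimal non-nilpotent groups, to writing down an explicit commutator.

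\emph{(i)$\Rightarrow$(ii).} Let $G$ be nilpotent, so $G$ is the direct product of its Sylow subgroups. Take $x,y$ of coprime prime power orders $p^a$ and $q^b$ with $p\neq q$. Then $x$ lies in the Sylow $p$-subgroup and $y$ in the Sylow $q$-subgroup, so $x$ and $y$ commute. Hence $o(xy)=o(x)o(y)$, which gives $\pi(o(xy))=\pi(o(x)o(y))$. If one of $x,y$ is trivial the equality is obvious.

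\emph{(ii)$\Rightarrow$(i).} Condition (ii) is inherited by every subgroup, because it only involves pairs of elements. Suppose $G$ satisfies (ii) but is not nilpotent.
\begin{enumerate}
\item Choose a subgroup $H\leq G$ that is not nilpotent but all of whose proper subgroups are nilpotent; this exists since $G$ is finite. By Schmidt's theorem, $H=P\rtimes Q$ where $P$ is a normal Sylow $p$-subgroup and $Q=\langle y\rangle$ is a cyclic Sylow $q$-subgroup with $q\neq p$.
\item The generator $y$ does not centralize $P$. Otherwise $H=P\times Q$ would be nilpotent.
\item Choose $g\in P$ with $gy\neq yg$, and set
$$x=gyg^{-1}y^{-1}.$$
Since $P$ is normal in $H$, we have $yg^{-1}y^{-1}\in P$, so $x\in P$ is a $p$-element. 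Moreover $x\neq 1$, so $p\mid o(x)$.
\item We have $xy=gyg^{-1}$, which is a conjugate of $y$. Hence $o(xy)=o(y)$ is a power of $q$, and $q\mid o(y)$ because $y\neq 1$.
\item Therefore $\pi(o(xy))=\{q\}\neq\{p,q\}=\pi(o(x)o(y))$. This contradicts (ii) for the pair $x,y$ in $H\leq G$.
\end{enumerate}

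The only substantial input is Schmidt's structure theorem. From it I need only two facts: $H$ has a normal Sylow $p$-subgroup, and $H$ has a complementary $q$-element $y$ acting nontrivially on it. The main point to check is that the constructed pair really violates (ii), namely $x\neq1$ and $xy$ is a $q$-element. Both follow directly from the construction.
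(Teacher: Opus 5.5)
Your proof is correct and complete. In (i)$\Rightarrow$(ii), nontrivial elements of coprime prime power orders lie in distinct Sylow factors of the direct decomposition, so they commute. In (ii)$\Rightarrow$(i), the element $x=g\,(yg^{-1}y^{-1})$ lies in the normal Sylow $p$-subgroup $P$ of the Schmidt subgroup and is nontrivial, and it satisfies $xy=gyg^{-1}$. Hence $p\in\pi(o(x)o(y))\setminus\pi(o(xy))$, which is the required contradiction.

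The paper itself gives no proof of this statement. It only quotes the result from Moret\'o and S\'aez \cite{More} in the introduction as motivation, so there is no internal argument to compare your route against. Your argument is self-contained and relies only on Schmidt's theorem. It is in the same spirit as the paper's own minimal-counterexample proof of Theorem \ref{Min1}, which also reduces to a Schmidt group $PQ$. It also resembles Lemma \ref{2.7}, where the order of $xy$, with $x$ in a normal subgroup, is controlled by $o(y)$.

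Your argument also shows slightly more than the statement. Only the inclusion $\pi(o(x)o(y))\subseteq\pi(o(xy))$ is ever used, so this one-sided condition already forces nilpotency. Moreover, the violating pair can be chosen with $xy$ conjugate to $y$.
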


Our study is also part of this investigation, but its originality is largely due to the concepts we present here in this article. Indeed, we define for $m \ge 1$, $L_m(G)$ by the set of all elements of $G$ such that $x^m=1$ (a such $x$ is called a $m$-element) and $D_m(G)$ by the set of all $m$-elements such that $o(xy) \mid m$ for any $y \in L_m(G)$. Analagously, for $n \ge 1$ coprime with $m$, we set $D_{m,n}(G)$ by set of all $m$-elements such that $o(xy) \mid n$ for any $y \in L_n(G) \setminus \{1\}$.

%The structures defined above are a tool to study the order of the product of two elements with coprime orders, as is the case of the subgroup $D_{m,n}(G)$, formed by all $m$-elements which satisfy a uniform condition in relation to the $n$-elements, and we will see how certain conditions on this set provide structural properties of the group. 

It is important to highlight that the set $L_m(G)$ appears in other works, among of them, the centenary work due to Frobenius \cite{Frobenius}, in which they prove that the size of $L_m(G)$ is a multiple of $m$.

Along this text we prove that the sets $D_m(G)$ and $D_{m,n}(G)$ have group structure. Furthermore, they are characteristic subgroups of $G$ and $D_{m,n}(G)$ is always nilpotent, which is a consequence of the following result.
\begin{thm}
  Let $G$ be a finite group with order $mn$ and $\mathrm{gcd}(m,n)=1$. For any $x\in L_n(G) \setminus{\{1\}}$,
$ D_{m,n}(G)\langle x\rangle$ is a Frobenius group with kernel $D_{m,n}(G)$.
In addition, $D_{m,n}(G)\neq 1$ is a $\pi(m)$-Hall subgroup if and only if there exists a $\pi(n)$-Hall subgroup $H$ such that $L_{m}(G)H=D_{m,n}(G)\rtimes H$ is a Frobenius group.
\end{thm}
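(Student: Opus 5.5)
The plan is to handle the first assertion in three steps: show that $D_{m,n}(G)$ is a subgroup, show that it is normal in $G$, and show that every nontrivial $n$-element acts on it fixed-point-freely. The Hall/Frobenius equivalence then follows from Schur--Zassenhaus. Throughout write $D=D_{m,n}(G)$; the case $D=1$ is degenerate and is set aside.

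\emph{Step 1 ($D$ is closed under inverses).} Let $d\in D$ and $y\in L_n(G)\setminus\{1\}$. If $dy=1$ then $d=y^{-1}$ has order dividing both $m$ and $n$, so $y=1$, a contradiction. Hence $dy\in L_n(G)\setminus\{1\}$, and left multiplication by elements of $D$ preserves $L_n(G)\setminus\{1\}$. For inverses, note $d^{-1}y=(y^{-1}d)^{-1}$, and $y^{-1}d$ is conjugate to $dy^{-1}$ (conjugate by $d$). Since $dy^{-1}$ is an $n$-element, so is $d^{-1}y$. Thus $D^{-1}=D$.

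\emph{Step 2 ($D$ is a normal subgroup).} Let $K=\langle D\rangle$. Since conjugation preserves element orders, $D$ is invariant under all automorphisms, so $K$ is characteristic, in particular normal. Write $k\in K$ as a product $d_1\cdots d_r$ of elements of $D$. By induction on $r$, using Step 1, $ky\in L_n(G)\setminus\{1\}$ for every $y\in L_n(G)\setminus\{1\}$.

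Now suppose $K$ contains an element $z\neq 1$ with $z^n=1$. Then $Kz=K$, so every element of $K$ is an $n$-element. This includes a nontrivial $d\in D$, which is an $m$-element, a contradiction. Hence $K$ has no elements of prime order dividing $n$. Since $|K|$ divides $mn$, we get $|K|\mid m$ and $K\subseteq L_m(G)$. Combined with the previous paragraph, this gives $K\subseteq D$, so $D=K$ is a characteristic subgroup.

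\emph{Step 3 (Frobenius action).} Take $x\in L_n(G)\setminus\{1\}$. Because $\gcd(|D|,o(x))=1$, we have $D\cap\langle x\rangle=1$, and $D\langle x\rangle=D\rtimes\langle x\rangle$. Suppose $1\neq x^j$ and $h\in C_D(x^j)$. Since $h$ and $x^j$ commute and have coprime orders, $o(hx^j)=o(h)o(x^j)$. This must divide $n$, which forces $h=1$. So every nontrivial element of $\langle x\rangle$ acts fixed-point-freely on $D$, and $D\langle x\rangle$ is a Frobenius group with kernel $D$. Nilpotency of $D$ then follows from Thompson's theorem on Frobenius kernels.

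\emph{Second assertion, forward direction.} Suppose $D\neq1$ is a normal Hall $\pi(m)$-subgroup. By Schur--Zassenhaus it has a complement $H$, which is a Hall $\pi(n)$-subgroup. By Step 3 every nontrivial element of $H$ acts fixed-point-freely on $D$, so $G=D\rtimes H$ is Frobenius. A normal Hall subgroup contains every $m$-element, so $L_m(G)=D$ and $L_m(G)H=D\rtimes H$.

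\emph{Second assertion, converse.} Suppose $L_m(G)H=D\rtimes H$ is Frobenius with $H$ a Hall $\pi(n)$-subgroup. Then $L_m(G)\subseteq DH$. In $DH$, the subgroup $D$ is a normal Hall $\pi(m)$-subgroup, so every $m$-element of $DH$ lies in $D$. Therefore $D$ contains every Sylow $p$-subgroup of $G$ for $p\mid m$, which gives $|D|=m$. Hence $D$ is a $\pi(m)$-Hall subgroup, and $D\neq 1$ because $D\rtimes H$ is a Frobenius group, whose kernel is nontrivial.

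The main obstacle is Step 2: proving that products of elements of $D$ are again $m$-elements. The coset trick $Kz=K$ is what resolves it, and I would check it carefully first.
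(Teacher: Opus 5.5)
Your proof is correct. For the theorem itself you follow the paper. Fixed-point-freeness of $\langle x\rangle$ on $D_{m,n}(G)$ comes from $o(hx^j)=o(h)o(x^j)$ having to divide $n$. The Hall equivalence is Schur--Zassenhaus plus the fact that a normal Hall $\pi(m)$-subgroup contains $L_m(G)$. You also supply the checks $L_m(G)=D_{m,n}(G)$ (forward direction) and $|D_{m,n}(G)|=m$ (converse), which the paper only asserts.

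The genuine difference is in Steps 1--2, where you prove from scratch that $D_{m,n}(G)$ is a characteristic subgroup. The paper does this separately in Lemma \ref{ces}(ii) by an elementwise computation. For $x,y\in D_{m,n}(G)$ it writes $xy=uv$, with $u$ the $m$-part and $v$ the $n$-part, commuting. If $v\neq 1$, it applies the defining property to $x\cdot(yv^{-1})$ to force $u=1$, and then to $x^{-1}\cdot(xy)$ to force $y=1$.

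Your route is more conceptual. You note that $D_{m,n}(G)$ consists exactly of the $m$-elements whose left multiplication maps $L_n(G)\setminus\{1\}$ into itself. This property passes to every element of $K=\langle D_{m,n}(G)\rangle$. A nontrivial $n$-element $z\in K$ is then impossible, since $z^{-1}z=1$ would have to lie in $L_n(G)\setminus\{1\}$. So that contradiction does not need a nontrivial $d\in D_{m,n}(G)$ at all.

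What your route buys:
\begin{itemize}
\item It avoids the $m$/$n$-decomposition and the case split on whether $x$ and $y$ commute.
\item It gives $\langle D_{m,n}(G)\rangle=D_{m,n}(G)$ in one stroke.
\item It loses no generality relative to the paper's setting of periodic groups of exponent dividing $mn$: there, Cauchy's theorem can be replaced by taking a suitable power of an element.
\end{itemize}
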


As an application of the previous result, we have proved a similar result to that done by Moretó and Sáez, which is a substantial inspiration for our study. 
\begin{thm}
    Let $G$ be a finite group of order $mn$ with $\gcd(m,n)=1$.
    For every $x,y\in G\setminus \{1\}$ with $\gcd(o(x),o(y))=1$, we have
    $\pi(o(xy))\subseteq \pi(o(y))$ if and only if $G$ is a Frobenius group with a complement of order $n$.
\end{thm}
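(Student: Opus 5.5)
Throughout, I read the hypothesis as the paper's abstract does: $x$ ranges over the $m$-elements and $y$ over the nontrivial $n$-elements of $G$. Read literally and symmetrically, the condition could never hold. Indeed, $xy$ and $yx$ are conjugate, so swapping $x$ and $y$ would force $\pi(o(xy))\subseteq\pi(o(x))\cap\pi(o(y))=\emptyset$, that is, $xy=1$, which is impossible for coprime nontrivial elements. I also assume $m,n>1$, so that both sides of the equivalence are meaningful.

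\emph{Sufficiency.} Let $G=K\rtimes H$ be Frobenius with kernel $K$ of order $m$ and complement $H$ of order $n$.
\begin{enumerate}
\item Since $K$ is a normal Hall $\pi(m)$-subgroup, every $m$-element $x$ lies in $K$.
\item Let $y\neq 1$ be an $n$-element. Then $xy\notin K$, because its image in $G/K\cong H$ equals the image of $y$, which is nontrivial.
\item In a Frobenius group every element outside the kernel lies in some conjugate $H^g$ of the complement. The projection $G\to G/K$ restricts to an isomorphism on $H^g$.
\item Hence $o(xy)$ equals the order of $yK$ in $G/K$, and likewise $o(y)$ equals the order of $yK$. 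So $o(xy)=o(y)$, which gives $\pi(o(xy))\subseteq\pi(o(y))$.
\end{enumerate}

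\emph{Necessity.} Assume the condition holds.
\begin{enumerate}
\item Take an $m$-element $x$ and an $n$-element $y\neq1$. Then $\pi(o(xy))\subseteq\pi(n)$, and $o(xy)$ divides $|G|=mn$ with $\gcd(m,n)=1$, so $o(xy)\mid n$.
\item By definition this means $x\in D_{m,n}(G)$. Hence $D_{m,n}(G)=L_m(G)$.
\item Earlier in the paper $D_{m,n}(G)$ is shown to be a subgroup. So $K:=L_m(G)$ is a subgroup, and it is normal (indeed characteristic).
\item Every element of $K$ has order dividing $m$, so $|K|$ involves only primes of $\pi(m)$. Conversely, $K$ contains every Sylow $p$-subgroup of $G$ for $p\in\pi(m)$. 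Therefore $K$ is a normal Hall $\pi(m)$-subgroup, and $K\neq1$ since $m>1$.
\item By Schur--Zassenhaus there is a complement $H$ of order $n$, which is a $\pi(n)$-Hall subgroup, and $L_m(G)H=G$.
\item The second part of Theorem 1.2 now says that $G=D_{m,n}(G)\rtimes H$ is a Frobenius group with kernel $K$ and complement $H$ of order $n$.
\end{enumerate}

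The Frobenius property can also be checked directly, without Theorem 1.2. Suppose $1\neq y\in H$ centralizes some $1\neq x\in K$. Then $x$ and $y$ commute and have coprime orders, so $o(xy)=o(x)o(y)$ has a prime divisor in $\pi(m)$. This contradicts $o(xy)\mid n$. Hence $H$ acts fixed-point-freely on $K$.

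The main obstacle I expect is not a hard computation. It is fixing the correct reading of the statement (which of $x$, $y$ is the $m$-element), and verifying carefully that $L_m(G)$ being a subgroup really makes it the full normal Hall subgroup, so that Theorem 1.2 or Schur--Zassenhaus can be applied.
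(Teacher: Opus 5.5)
Your proof is correct and follows the paper's proof of Theorem \ref{Fro1} essentially step by step: the hypothesis forces $D_{m,n}(G)=L_m(G)$, this is a normal Hall $\pi(m)$-subgroup, Schur--Zassenhaus gives a complement of order $n$, fixed-point-freeness comes from Theorem \ref{nil} (or from your direct commuting-elements check), and the converse is the identity $o(xy)=o(y)$ of Lemma \ref{2.7}(b). Your reading of the hypothesis ($x$ an $m$-element, $y$ a nontrivial $n$-element, $m,n>1$) is the one the paper itself uses in Theorem \ref{Fro1}; your projection to $G/K$ also covers arbitrary nontrivial $n$-elements $y$, not just those in a fixed complement, and you avoid the paper's unneeded appeal to the nilpotency of $D_{m,n}(G)$.
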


It is worth noting that our approach is conceptually close to that adopted in recent works about LC-nilpotent groups \cite{Amiri}. In this paper, we associate the structure defined here with those that they introduced in that previous work. In fact, we prove that $D_{m,n}(G)$ is actually a subgroup of $LC(G)$. Besides that, in the case that $m$ is a power of a prime $p$, $LC_p(G)$ is a normal $p$-subgroup of $D_m(G)$.

%From a technical point of view, the proofs made here combine classical results with the detailed study of the structures defined by $D_m(G)$ and $D_{m,n}(G)$. In particular, we show that the arithmetical condition about $\pi(o(xy))$ forces the decomposition of $G$ as a Frobenius group, and that the subgroups $D_{m,n}(G)$ perform a central role as characteristic nilpotent kernels.

We investigate how the sets $D_{m,n}(\cdot)$ measure the nilpotency of a group. In fact, we proved that

\begin{thm}
     Let $G$ be a finite group of order $mn$, where $\gcd(m,n)=1$. Then, for any subgroup $M$ of $G$, we have
\[
D_{m,n}\!\bigl(M / Z(M)\bigr) = 1,
\]
if and only if $G$ is a nilpotent group.
\end{thm}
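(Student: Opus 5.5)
The plan is to prove the two implications separately. In both, the main structural tool is the Frobenius description from Theorem~1.2. One point of interpretation has to be settled first: $D_{m,n}(H)$ is computed for $H = M/Z(M)$ using the same pair $(m,n)$, via $\pi(m)$ and $\pi(n)$ (that is, $L_m(H)$ and $L_n(H)$ with $m$ and $n$ read through their prime sets). For the statement to have a chance, the hypothesis ``for any subgroup $M$'' must be read over all splittings $|G| = mn$ with $\gcd(m,n)=1$, or at least with the roles of $m$ and $n$ allowed to be interchanged. For a fixed ordered pair it fails: in $S_3$ with $m=2$, $n=3$, every $D_{2,3}(M/Z(M))$ is trivial, whereas $D_{3,2}(S_3)=A_3$. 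I would state this reading explicitly at the start and then argue under it.

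\emph{Nilpotent $\Rightarrow$ $D_{m,n}(M/Z(M))=1$.} Every $H = M/Z(M)$ is nilpotent, so $H$ is the direct product of its Sylow subgroups. Take $x \in D_{m,n}(H)$ and suppose $L_n(H)\neq 1$. Choose $1\neq y\in L_n(H)$. Then $y$ commutes with $x$ and $o(xy)=o(x)o(y)$. Since $o(xy)\mid n$ and $\gcd(o(x),n)=1$, this forces $x=1$.

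The delicate case is $L_n(H)=1$, where the defining condition is vacuous and $D_{m,n}(H)=L_m(H)$. Here I would use that this set is trivial exactly when $H$ is a $\pi(n)$-group. If $H$ is a nontrivial $\pi(m)$-group, there is nothing to prove only if one passes to the splitting with the roles of $m$ and $n$ exchanged. I expect to need the convention implicit in Theorem~1.2, namely that the $n$-elements are taken nontrivial, so that the statement is only meaningful when $\pi(n)\cap\pi(|H|)\neq\emptyset$. I would flag this as the first place where the wording must be read carefully.

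\emph{Not nilpotent $\Rightarrow$ some $D_{m,n}(M/Z(M))\neq1$.} Let $M\le G$ be a minimal non-nilpotent subgroup. By Schmidt's theorem, $M=P\rtimes Q$ with $P$ a normal Sylow $p$-subgroup and $Q=\langle y\rangle$ a cyclic $q$-group, and $Z(M)=\Phi(P)\times\langle y^q\rangle$. Hence $H=M/Z(M)$ is a Frobenius group with elementary abelian kernel $K=PZ(M)/Z(M)$ and complement of order $q$.

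Now take the splitting with $p\mid m$ and $q\mid n$; this is where the reading over all splittings is used. Let $x\in K$ and let $z\in L_n(H)\setminus\{1\}$. Then $z$ lies in a conjugate of the complement. In a Frobenius group, any product of a nontrivial complement element with a kernel element is conjugate to an element of the complement, so $o(xz)=q\mid n$. Thus $K\le D_{m,n}(H)$, and $D_{m,n}(H)\neq 1$.

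The main obstacle is the bookkeeping of which prime goes into $m$ and which into $n$. I would try first to absorb it by choosing the splitting adapted to the Schmidt subgroup $M$, as above. If the splitting is instead fixed in advance, the argument only works when $p\mid m$ and $q\mid n$. The case $q\mid m$, $p\mid n$ cannot be repaired, as the $S_3$ example shows.
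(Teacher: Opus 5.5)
Your counterexample is right: the statement cannot be proved as written. For $G=S_3$ and $(m,n)=(2,3)$, every $M/Z(M)$ is $1$ or $S_3$. Moreover $D_{2,3}(S_3)=1$, because each transposition times a $3$-cycle is a transposition. Yet $S_3$ is not nilpotent. The pairs $(|G|,1)$ and $(1,|G|)$ give even cheaper counterexamples, by Definition~\ref{def2}(iv) and $L_1=1$ respectively.

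The paper's proof combines Theorem~\ref{Min1} with Lemma~\ref{2.16}, and it breaks exactly where you predicted. After reducing to a Schmidt group $G=PQ$, it asserts ``since $\gcd(m,n)=1$ and $n>1$, we have $\pi(m)=p$ and $\pi(n)=q$'', and nothing justifies this. If the hypothesis ranges over all coprime factorizations $|G|=mn$, your argument is a correct proof.

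Your forward direction is the paper's Lemma~\ref{2.16}. For the converse you use the same mechanism as the paper. A minimal non-nilpotent group modulo its center is Frobenius with $p$-group kernel and complement of order $q$, and Lemma~\ref{2.7}(b) puts that kernel inside $D_{m,n}$. You get there more directly: you apply the hypothesis to a Schmidt subgroup $M\le G$ and quote the structure theorem. The paper instead runs a minimal-counterexample induction and rebuilds the Frobenius quotient by hand (minimal normal $N$, $G=NQ$, $C_N(Q)=1$, cases $|Q|=q$ and $|Q|>q$).

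Quoting the correct center $\Phi(P)\times\langle y^q\rangle$ also avoids a slip in the paper's case $|Q|>q$, where it claims $Z(G)=\Omega_1(Q)$. That claim fails once $|Q|\ge q^3$. For example, in $C_7\rtimes C_{27}$ with $y$ acting by an automorphism of order $3$, the center $\langle y^3\rangle$ has order $9$. The price of your route is that you must cite Schmidt's theorem in full. Besides the description of $Z(M)$, you need that $P/\Phi(P)$ is an irreducible $Q$-module, which is what makes the image of $y$ act fixed-point-freely.

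Two corrections. First, the case $L_n(H)=1$ is not delicate. Definition~\ref{def2}(iv) sets $D_{m,n}(H)=1$ whenever $\gcd(\exp H,n)=1$, so that clause plus your commuting argument settles the forward direction. Your remarks about swapping $m$ and $n$ at that point should be removed. Without the clause the forward direction would genuinely fail, e.g.\ for $D_8\times C_3$ with $(m,n)=(8,3)$ and $M=D_8$.

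Second, the fallback reading in which only the roles of $m$ and $n$ may be interchanged is not enough, so your list of unrepairable cases is incomplete. Take $G=S_3\times C_5$ and $\{m,n\}=\{6,5\}$. Every $M/Z(M)$ is $1$ or $S_3$, and $D_{6,5}(S_3)=D_{5,6}(S_3)=1$, yet $G$ is not nilpotent. The failure occurs because both Schmidt primes lie on the same side. The hypothesis has to range over all coprime factorizations (e.g.\ $m=|G|_p$, $n=|G|_{p'}$), which is what your argument actually uses.
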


In the final section, we introduce the $E$-series of a finite group $G$, defined recursively from the subgroups $D_{m,n}(G)$ and $D_{n,m}(G)$. This construction alternates the parameters $m$ and $n$ and provides a hierarchy of characteristic subgroups with nilpotent quotients. We show that a group is nilpotent precisely when its $E$-series has length $2$, and more generally, the length of the $E$-series is always bounded by 4 when it is finite.  

%The paper is organized as follows: We start with some properties of $D_m(G)$ and $D_{m,n}(G)$, and we show how certain conditions on these sets define structural properties of the group. In the last section, we define a type of series in a recursive way from $D_{m,n}$ and we show characteristics about them.

\section{Properties of $D_m(G)$ and $D_{m,n}(G)$}
Let \(G\) be a finite group and let \(m,n\ge 1\) be coprime integers. In this section we recall the definitions of \(D_m(G)\) and \(D_{m,n}(G)\) and establish their fundamental properties --  behavior under quotients, direct products, and automorphisms -- as well as nilpotency criteria and the structural consequences when these subgroups are nontrivial.

\begin{defn}\label{def2}
   Let $G$ be a periodic group with finite exponent and let $m\geq 1$ and $n\geq 1$ be two positive and coprime integers.
  We denote the set $\{ x\in G \mid x^m=1\}$ by $L_m(G)$, and call every element of $L_m(G)$ a \emph{$m$-element}.
   For $x\in L_m(G)$, we set:
   \begin{enumerate}
       \item[(i)] $D_m(x) = \{ y\in L_m(G) \mid o(xy)\mid m\}$. 
       \item[(ii)] $D_{m,n}(x) =  \{u\in L_n(G) \mid o(xu)\mid n \}$.  
       \item[(iii)] $D_m(G)=\{x\in L_m(G) \mid D_m(x)=L_m(G)\}$.
       \item[(iv)] $D_{m,n}(G)=1$ whenever $\gcd(\exp(G),n)=1$. For $n>1$, we define
       $$
       D_{m,n}(G)=\{x\in L_m(G) \mid L_n(G)\setminus\{1\}\subseteq D_{m,n}(x)\}.
       $$
   \end{enumerate}  
\end{defn}

\begin{lem}\label{Lemma2.1}
   Let $G$ and $H$ be two periodic groups of finite exponent $mn$ where $gcd(m,n)=1$.
Let $x\in L_m(G)$ and $N$ a normal subgroup of $G$.
Then 
\begin{itemize}
    \item [$(i)$] $\frac{D_m(x)N}{N}\subseteq D_m(xN)$ and  $\frac{D_{m,n}(x)N}{N}\subseteq D_{m,n}(xN)$.
    \item [$(ii)$] $\{xN: x\in D_m(G)\}\subseteq D_m(\frac{G}{N})$ and $\{xN: x\in D_{m,n}(G)\}\subseteq D_{m,n}(\frac{G}{N})$.
    \item [$(iii)$] $D_m(G\times H)=D_m(G)\times D_m(H)$ and $D_{m,n}(G\times H)\subset D_{m,n}(G)\times D_{m,n}(H)$.
    \item [$(iv)$] If $H\leq G$, then $D_m(G)\cap H \leq D_m(H)$ and $D_{m,n}(G)\cap H \leq D_{m,n}(H)$.
\end{itemize}

\end{lem}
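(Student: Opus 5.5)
The plan is to reduce every item to a few elementary facts about element orders.

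First, orders can only drop under a homomorphism: $o(gN)\mid o(g)$. Second, a normal subgroup of coprime order can be "lifted through". Third, orders in a direct product are computed componentwise.

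\emph{Item (i).} Take $y\in D_m(x)$, so $y^m=1$ and $o(xy)\mid m$. Then $(yN)^m=N$, so $yN\in L_m(G/N)$. Also $o(xN\cdot yN)=o(xyN)$ divides $o(xy)$, hence divides $m$. So $yN\in D_m(xN)$.

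The $D_{m,n}$ version is identical. For $u\in D_{m,n}(x)$ we have $uN\in L_n(G/N)$ and $o(xuN)\mid o(xu)\mid n$.

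\emph{Item (ii).} Let $x\in D_m(G)$ and let $\bar y\in L_m(G/N)$. We must produce a preimage $y\in L_m(G)$; then (i) gives $\bar y\in D_m(xN)$.

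To do this, write $\bar y=gN$. Decompose $g=g_mg_n$ into its commuting $\pi(m)$-part and $\pi(n)$-part; this is possible because $\exp(G)\mid mn$. Since $\bar y$ is an $m$-element, the image $g_nN$ is an $n$-element that is a power of $\bar y$, so $g_nN=N$. Hence $y=g_m$ works.

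For $D_{m,n}$ the argument is the same: each $\bar u\in L_n(G/N)\setminus\{1\}$ lifts to some $u\in L_n(G)$. That lift is automatically nontrivial, so $u\in D_{m,n}(x)$ and (i) applies. If $L_n(G/N)=\{1\}$, i.e.\ $\gcd(\exp(G/N),n)=1$, then by definition $D_{m,n}(G/N)=1$. In that case we only need $xN=1$, so this is the one spot needing care; if the claim fails there, I would note that the definition then forces $D_{m,n}(G/N)$ to consist of all $m$-elements. This degenerate case of the convention is the main obstacle, together with the lifting step.

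\emph{Item (iv).} This is immediate from the definitions, since $L_m(H)=L_m(G)\cap H$ and orders are computed in $G$. For $D_{m,n}$, the degenerate case $L_n(H)=1$ again needs the convention, and is handled as above.

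\emph{Item (iii).} We use $o((a,b))=\mathrm{lcm}(o(a),o(b))$ and $L_m(G\times H)=L_m(G)\times L_m(H)$.

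Suppose $(x,x')\in D_m(G\times H)$. Test it against $(y,1)$ and $(1,y')$; this shows $x\in D_m(G)$ and $x'\in D_m(H)$.

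Conversely, if both $x\in D_m(G)$ and $x'\in D_m(H)$, then
\[
o\bigl((xy,x'y')\bigr)=\mathrm{lcm}\bigl(o(xy),o(x'y')\bigr),
\]
and both terms divide $m$.

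For $D_{m,n}$, test $(x,x')$ against the nontrivial elements $(u,1)$ with $u\ne1$, which gives $x\in D_{m,n}(G)$, and similarly against $(1,u')$, which gives $x'\in D_{m,n}(H)$. Only the inclusion is claimed here. If $L_n(G)=1$, then $D_{m,n}(G)=1$ by convention. We must then check that $x=1$; this follows by testing against $(1,u')$ with $o(xu')=\mathrm{lcm}(o(x),o(u'))\mid n$, which forces $x=1$ because $\gcd(m,n)=1$.
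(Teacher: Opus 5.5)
The generic-case arguments are the same as the paper's: orders can only drop in $G/N$, orders in $G\times H$ are computed componentwise, and $L_k(H)=L_k(G)\cap H$. Your proof of the lifting $L_m(G)N/N=L_m(G/N)$ via $g=g_mg_n$ supplies a step the paper only asserts.

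The gap is the convention $D_{m,n}(X)=1$ when $\gcd(\exp(X),n)=1$, which you flag as ``the main obstacle'' but never settle. In (ii) you reduce to showing $xN=1$. Instead of proving it, you offer to switch to the other reading of the definition. That is not an argument, and it contradicts the convention you have just invoked. The case is easy to close:
\begin{itemize}
\item If $L_n(G/N)=1$, then every $n$-element of $G$ lies in $N$.
\item If moreover $L_n(G)=1$, then $D_{m,n}(G)=1$ by the convention and there is nothing to prove.
\item Otherwise, for $x\in D_{m,n}(G)$ pick $1\neq u\in L_n(G)$. Then $o(xu)\mid n$ gives $xu\in L_n(G)\subseteq N$, and since $u\in N$ we get $x\in N$.
\end{itemize}

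In (iv), ``handled as above'' is wrong, because for an arbitrary subgroup the inclusion is actually false under the convention. Take $G=S_3$, $m=3$, $n=2$, $H=A_3$. Then $D_{3,2}(S_3)=A_3$ (Example~\ref{product}), whereas $\gcd(\exp(A_3),2)=1$ forces $D_{3,2}(A_3)=1$. You must therefore do one of the following, and say which:
\begin{itemize}
\item Use that $H$ itself has exponent $mn$, as the standing hypothesis literally says. Then $L_n(H)\neq 1$ whenever $n>1$, and for $n=1$ every $D_{m,1}(\cdot)$ is trivial.
\item Read Definition~\ref{def2}(iv) without the convention.
\end{itemize}
The paper's own proof silently applies the ``$n>1$'' clause everywhere, so it has the same blind spot, but your proof should not leave the case open.

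There is also a small slip in (iii). Testing $(x,x')$ against $(1,u')$ produces $(x,x'u')$, whose order is $\mathrm{lcm}(o(x),o(x'u'))$, not $\mathrm{lcm}(o(x),o(u'))$. The conclusion $x=1$ is unaffected.
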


\begin{proof}
\begin{itemize}
    \item  [$(i)$] Let $y\in D_m(x)$ and $ z\in {D_{m,n}(x)}$. Then $o(xy)\mid m$ and $o(xz)\mid n$, and so 
   $o(xyN)\mid m$ and  $o(xzN)\mid n$. Hence $yN\in D_m(xN)$ and ${z}N\in D_{m,n}(xN)$.

   \item [$(ii)$] Define $A = \{xN: x \in D_m(G)\}$ and $B = \{xN: x \in D_{m,n}(G)\}$. Let $yN \in A$ and $zN \in B$. So, $y \in D_m(G)$ and $z \in D_{m,n}(G)$, which means that $D_m(y) = L_m(G)$ and $D_{m,n}(z) = L_n(G) \setminus \{1\}$. Hence, 
   $$
   \dfrac{D_m(y)N}{N} = \dfrac{L_m(G)N}{N} = L_m\left(\dfrac{G}{N}\right), $$
   and$$\dfrac{D_{m,n}(z)N}{N} = \dfrac{(L_n(G)\setminus \{1\})N}{N} = L_n\left(\dfrac{G}{N}\right) \setminus \{N\}.
   $$
   By the previous item, it follows that
   $$
   D_m(yN) = L_m\left(\dfrac{G}{N}\right) \quad \mbox{and} \quad D_{m,n}(zN) = L_n\left(\dfrac{G}{N}\right) \setminus \{N\}.
   $$
Therefore, $yN \in D_m\left(\tfrac{G}{N}\right)$ and $zN \in D_{m,n}\left(\tfrac{G}{N}\right)$.

   \item  [$(iii)$] Let $(a,b) \in D_m(G\times H)$. Taking $x \in L_m(G)$ and $y \in L_m(H)$, we have that $(x,y) \in L_m(G \times H) = D_m((a,b))$. Hence, $o((a,b)(x,y)) \mid m$, which implies that $o(ax)$ and $o(by)$ both divide $m$ and so $x \in D_m(a)$ and $y \in D_m(b)$. Therefore, $(a,b) \in D_m(G) \times D_m(H)$. For the converse, take $(a,b) \in D_m(G) \times D_m(H)$. For any $(x,y) \in L_m(G \times H)$, we have that $x \in L_m(G) = D_m(a)$ and $y \in L_m(H) = D_m(b)$. Hence, $o(ax) \mid m$ and $o(by) \mid m$, which implies that $o((a,b)(x,y)) \mid m$ and so $(x,y) \in D_m((a,b))$. Therefore, $(a,b) \in D_m(G \times H)$.

   \noindent Now, let $(a,b) \in D_{m,n}(G\times H)$. Taking $x \in L_n(G) \setminus \{1\}$ and $y \in L_n(G) \setminus \{1\}$, we have that $(x,y) \in L_n(G\times H) \setminus \{(1,1)\} \subset D_{m,n}(a,b)$. Hence, $o((a,b)(x,y)) \mid n$, which implies that $o(ax)$ and $o(by)$ both divide $n$ and so $x \in D_{m,n}(a)$ and $y \in D_{m,n}(b)$. Therefore, $(a,b) \in D_{m,n}(G) \times D_{m,n}(H)$. The reverse inclusion is not necessarily true (see Example \ref{product} below). 

   \item [$(iv)$] We only prove for $D_{m,n}(\cdot)$. The other case is analogous. Let $x \in D_{m,n}(G) \cap H$. Then $o(x) \mid m$, which implies that $x \in L_m(H)$. Now, $L_n(H)\setminus \{1\} \subset L_n(G)\setminus \{1\} \subset D_{m,n}(x)$, because $x \in D_{m,n}(G)$ and $H \leq G$. Thus, $x \in D_{m,n}(H)$. 
\end{itemize}\end{proof}

The assertions of Lemma \ref{Lemma2.1} become clearer in a concrete setting; we therefore present an example that computes \(D_m\) and \(D_{m,n}\) in a specific direct product.

\begin{exam}\label{product}
    If $G=H=S_3$, $m=3$ and $n=2$, then $D_3(G)\cong C_3\cong D_{3,2}(G)$. Also, $D_3(G \times H) \cong C_3 \times C_3$ and $D_{3,2}(G \times H)=1$. This shows why the equality in the last sentence of Lemma \ref{Lemma2.1} (iii) may not occur. Notice that this example also shows that $D_{m,n}(\cdot)$ does not preserve the inclusion relation, since $S_3 \le S_3 \times S_3$, but $D_{m,n}(S_3) \nsubseteq D_{m,n}(S_3 \times S_3)$. In particular, the equality in Lemma \ref{Lemma2.1} (iv), in general, does not occur.
\end{exam}

The previous example shows that the behavior of $D_{m,n}(\cdot)$ under direct products is more subtle than that of $D_m(\cdot)$. The next remark clarifies precisely when equality holds in Lemma \ref{Lemma2.1} (iii).

\begin{rem}
Let $G,H$ be two periodic groups with exponent $mn$, with $\gcd(m,n)=1$. The equality $D_{m,n}(G\times H)=D_{m,n}(G) \times D_{m,n}(H)$ holds if and only if $D_{m,n}(G)=D_{m,n}(H) =1$. Indeed, suppose that $D_{m,n}(G\times H)=D_{m,n}(G) \times D_{m,n}(H)$ and let $(a,b) \in D_{m,n}(G) \times D_{m,n}(H)$. For $y \in L_n(H) \setminus \{1\}$, we have that $(1,y) \in L_{n}(G\times H) \setminus \{(1,1)\}$. Since $(a,b) \in D_{m,n}(G\times H)$, it holds that $o((a,b)(1,y)) \mid n$, which implies that $o(a) \mid n$. Since $\gcd(m,n)=1$, this implies that $a=1$. With a similar argument, we can show that $b=1$.   
\end{rem}
After examining how the sets $D_m$ and $D_{m,n}$ behave with respect to quotients and direct products, we investigate how they behave with respect to automorphisms.

\begin{lem}\label{ces} It holds that
\begin{itemize}
    \item [$(i)$] For any $\sigma\in Aut(G)$, we have $D_m(G)^{\sigma}=D_m(G)$ and $D_{m,n}(G)^{\sigma}=D_{m,n}(G)$.

    \item [$(ii)$] $\langle D_m(G)\rangle=D_m(G)$ and  $\langle D_{m,n}(G)\rangle=D_{m,n}(G)$.

    \item [$(iii)$] If $G$ is locally finite, then $D_{m,n}(G)\subseteq D_m(G).$
\end{itemize}

%(ii) The exponents of the subgroup generated by $D_m(G)$ and $D_{m,n}(G)$ divide $m$.
\end{lem}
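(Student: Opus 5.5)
The three parts are proved in the order (i), (iii), (ii), because closure of $D_{m,n}(G)$ under products will use (iii).

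\emph{Part (i).} An automorphism $\sigma$ preserves element orders, so $L_m(G)^\sigma=L_m(G)$ and $\bigl(L_n(G)\setminus\{1\}\bigr)^\sigma=L_n(G)\setminus\{1\}$. Moreover $o(x^\sigma y^\sigma)=o((xy)^\sigma)=o(xy)$. Hence $D_m(x^\sigma)=D_m(x)^\sigma$ and $D_{m,n}(x^\sigma)=D_{m,n}(x)^\sigma$. It follows that $x\in D_m(G)$ if and only if $x^\sigma\in D_m(G)$, and likewise for $D_{m,n}(G)$.

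\emph{Part (ii) for $D_m(G)$.} Since $G$ is periodic, inverses are powers, so it suffices to show that each set contains $1$ and is closed under products. Let $x,y\in D_m(G)$ and $z\in L_m(G)$. Since $y\in L_m(G)=D_m(x)$, we get $o(xy)\mid m$, so $xy\in L_m(G)$. Since $z\in D_m(y)$, the element $yz$ lies in $L_m(G)=D_m(x)$, so $o((xy)z)=o(x(yz))\mid m$. Thus $xy\in D_m(G)$.

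\emph{Part (ii) for $D_{m,n}(G)$.} Let $x,y\in D_{m,n}(G)$ and $u\in L_n(G)\setminus\{1\}$. Then $yu\in L_n(G)$. Also $yu\neq 1$: otherwise $y=u^{-1}$ would have order dividing both $m$ and $n$, forcing $u=1$. Therefore $x(yu)$ has order dividing $n$. The remaining point is that $xy\in L_m(G)$, and this is not visible directly. I will get it from (iii): $y\in L_m(G)=D_m(x)$ gives $o(xy)\mid m$. This route needs local finiteness. For finite $G$, which is the paper's setting, this is automatic. In the general periodic case I would reduce to the subgroup $\langle x,y\rangle$ if it is finite; I flag this reduction as a possible gap.

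\emph{Part (iii), the main obstacle.} Let $x\in D_{m,n}(G)$ with $x\neq 1$, and let $y\in L_m(G)$. Set $H=\langle x,y\rangle$, which is finite by local finiteness. By Lemma~\ref{Lemma2.1}(iv), $x\in D_{m,n}(H)$. If $H$ has no nontrivial $n$-element, then $|H|$ is coprime to $n$, so $H$ has exponent dividing $m$ and $o(xy)\mid m$.

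Otherwise, left multiplication by $x$ sends $L_n(H)\setminus\{1\}$ into $L_n(H)$. It never hits $1$, since $xu=1$ is impossible for $x\neq1$. Being injective on a finite set, it permutes $L_n(H)\setminus\{1\}$. Hence $\langle x\rangle$ acts semiregularly on $L_n(H)\setminus\{1\}$ by left multiplication. Consequently $o(x)$ divides $|L_n(H)|-1$, and more generally every element $g$ outside the set $L_n(H)$ behaves "Frobenius-like" with respect to the $n$-elements.

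The plan is to combine this with Frobenius' theorem that $|L_n(H)|$ is a multiple of the $\pi(n)$-part of $|H|$. From this I want to show that the $m$-elements of $H$ lying in $D_{m,n}(H)$ generate a normal $\pi(m)$-subgroup $K$ on which every nontrivial $n$-element acts fixed-point-freely, so that $K\langle u\rangle$ is a Frobenius group. I would then show that $y$, being an $m$-element, normalizes $K$ and lies in a $\pi(m)$-subgroup containing $K$, which gives $o(xy)\mid m$. Making this count-to-structure step rigorous is the step I expect to be hardest. If it resists, I would first try to prove directly that $K$ is normalized by all $m$-elements of $H$, using the conjugation invariance from part (i).
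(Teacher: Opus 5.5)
\textbf{There is a genuine gap.} Your part (i) and the $D_m(G)$ half of (ii) are correct and match the paper. The gap is the one nontrivial point for $D_{m,n}(G)$, namely that $xy\in L_m(G)$ for $x,y\in D_{m,n}(G)$. You defer it to (iii), but (iii) is only sketched, and the step you flag as hardest presupposes what it is meant to prove. To know that the $m$-elements of $D_{m,n}(H)$ generate a $\pi(m)$-subgroup $K$ on which nontrivial $n$-elements act fixed-point-freely, you need products of elements of $D_{m,n}(H)$ to be $m$-elements lying again in $D_{m,n}(H)$. That is part (ii) for $H$, which is exactly what (iii) was supposed to supply, so the plan is circular. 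The divisibility $o(x)\mid |L_n(H)|-1$ and Frobenius' theorem are numerical facts about a single element and do not address products. Moreover, even a completed version of this route would give (ii) only for locally finite $G$, whereas (ii) is stated without that hypothesis (local finiteness is assumed only in (iii)).

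The missing idea is a direct elementary argument; this is how the paper proves (ii) before (iii), using only that $\exp(G)$ divides $mn$. Write $xy=uv=vu$ with $u\in L_m(G)$, $v\in L_n(G)$, and suppose $v\neq 1$. Since $y\in D_{m,n}(G)$ and $v^{-1}\in L_n(G)\setminus\{1\}$, the element $yv^{-1}$ is an $n$-element. It is nontrivial, for otherwise $y=v\in L_m(G)\cap L_n(G)=\{1\}$. Hence $u=x(yv^{-1})$ has order dividing $n$, so $u=1$. Then $x=(yv^{-1})^{-1}$ is both an $m$- and an $n$-element, so $x=1$ and $yv^{-1}=1$, a contradiction. (The paper reaches the contradiction slightly differently, via $x^{-1}\in D_{m,n}(G)$.)

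This is really your own permutation observation, used for closure instead of counting. The set of $g$ with $g\bigl(L_n(G)\setminus\{1\}\bigr)\subseteq L_n(G)\setminus\{1\}$ is trivially closed under products and contains $D_{m,n}(G)$. If such a $g=uv$ had $v\neq1$, then $u=gv^{-1}$ would be a nontrivial $n$-element. So, when $L_n(G)\neq\{1\}$, this set is exactly $D_{m,n}(G)$.

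With (ii) in hand, (iii) is the easy consequence, as in the paper. By (i) and (ii), $D_{m,n}(G)$ is a normal subgroup consisting of $m$-elements. So for $x\in D_{m,n}(G)$ and $y\in L_m(G)$, the finite group $(D_{m,n}(G)\cap\langle x,y\rangle)\langle y\rangle$ is a $\pi(m)$-group containing $xy$. Hence $o(xy)\mid m$, because $\exp(G)\mid mn$.
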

\begin{proof}   
\begin{itemize}
    \item [$(i)$] Let $x\in D_m(G)$ and let $\sigma\in Aut(G)$.
Let $y$ be an $m$-element of $G$.
Then there exists an $m$-element $z\in G$ such that $\sigma(z)=y$.
Then $$o(\sigma(x)y)=o(\sigma(xz))=o(xz)\mid m.$$ 
This proves that $D_m(G)^\sigma \subset D_m(G)$. The converse is analogous.

Let $x\in D_{m,n}(G)$ and let $\sigma\in Aut(G)$.
Let $y \in L_n(G) \setminus \{1\}$. Then there exists $z \in L_n(G) \setminus \{1\}$ such that $\sigma(z)=y$.
Then $$o(\sigma(x)y)=o(\sigma(xz))=o(xz)\mid n.$$ 
This proves that $D_{m,n}(G)^\sigma \subset D_{m,n}(G)$. The converse is again analogous.

\item [$(ii)$] Let $x,y\in D_m(G)$, and let $z\in L_m(G)$. Since $y^{-1}\in D_m(G)$, we have $y^{-1}z\in L_m(G)$, so
    $o(x(y^{-1}z))\mid m$, and hence $xy^{-1}\in D_m(G)$.
    It follows that $D_m(G)\leq G$.

    Now, we prove $D_{m,n}(G)\le G$.
    Let $x\in D_{m,n}(G)$, and $z\in L_n(G)\setminus\{1\}$.
    Since \[o(xz)=o((xz)^{-1})=o(z((xz)^{-1}z^{-1})=o(x^{-1}z),\]
    we have $x^{-1}\in D_{m,n}(G)$.
    
Let $x,y\in D_{m,n}(G)$.
We claim that
\[
xy\in L_m(G).
\]

If  $xy=yx$ then 
\[
o(xy)\mid \operatorname{lcm}(o(x),o(y))\mid m,
\]
so the claim holds.

Thus, assume that $xy\ne yx$.
Then there exist elements
\[
u\in L_m(G) \quad \text{and} \quad v\in L_n(G)
\]
such that
\[
xy=uv=vu.
\]

Suppose, for a contradiction, that $v\neq 1$.
Since $x,y\in D_{m,n}(G)$, we have
\[
o(yv^{-1})\mid n.
\]

If $yv^{-1}=1$, then $y=v\in L_m(G)\cap L_n(G)=\{1\}$.
Hence $y=1$, which implies $xy=yx$, a contradiction.

Therefore $yv^{-1}\neq 1$.
It follows that
\[
o(u)=o\!\left(x(yv^{-1})\right)\mid n,
\]
and since $u\in L_m(G)$ with $\gcd(m,n)=1$, we conclude that $u=1$.

Consequently,
\[
xy=v\in L_n(G).
\]
Then
\[
o\bigl(x^{-1}(xy)\bigr)=o(y)\mid n.
\]
Since $y\in L_m(G)$, we also have $o(y)\mid m$, and again by
$\gcd(m,n)=1$ it follows that $y=1$.
This implies $xy=yx$, a contradiction.
Hence $v=1$, and therefore $xy\in L_m(G)$, as claimed.

Let  
    $z\in L_n(G)\setminus \{1\}$.
    Since $y^{-1}\in D_{m,n}(G)$, we have $y^{-1}z\in L_n(G)$.
    If $y^{-1}z=1$, then $y=z\in L_n(G)\cap L_m(G)=1$, which is a contradiction.
    So $y^{-1}z\in L_n(G)\setminus \{1\}.$
    Since $x\in D_{m,n}(G)$, we have 
    $o(x(y^{-1}z))\mid n$, and hence $xy^{-1}\in D_{m,n}(G)$.
    It follows that $D_{m,n}(G)\leq G$. 

\item [$(iii)$] By the previous items, $D_{m,n}(G) \trianglelefteq G$. Let $x\in  D_{m,n}(G)$ and let $y\in L_{m}(G)$. Since $G$ is locally finite, 
$$\exp( D_{m,n}(G)\langle y\rangle)=\mathrm{lcm}(\exp(D_{m,n}(G)),\exp(\langle y \rangle))\mid m,$$ because $\exp( D_{m,n}(G)) \mid m$.
It follows that $o(xy)\mid m$, and so $x\in D_m(G)$.
Hence $D_{m,n}(G)\subseteq D_m(G)$.
\end{itemize}
\end{proof}

The following remark shows that $D_{m,n}(G)$ cannot be too large: despite its stability under automorphisms, it is always a proper subgroup whenever $n>1$.

\begin{rem}\label{proper}
If $n>1$ and $(m,n)=1$, $D_{m,n}(G)$ is always a proper subgroup of $G$. Indeed, suppose by contradiction that $D_{m,n}(G) = G$. Once we know that $D_{m,n}(G) \subseteq L_m(G)$, we would have $G = L_m(G)$. Now, let $u \in L_n(G)$. Since $D_{m,n}(G) = G$, it follows that $u \in L_m(G)$. Consequently, $o(u) \mid m$ and $o(u) \mid n$. Since $m$ and $n$ are coprime, we conclude that $u = 1$.  
Hence, $L_n(G) = 1$.
However, by Cauchy's Theorem, for every prime number $p \mid n$, there exists an element $1 \neq x \in G$ such that $o(x) = p$. Then, $x \in L_n(G)$, which implies that $L_n(G) \neq 1$, a contradiction.
\end{rem}

The next observation highlights how the nature of
$m$ influences the internal structure of $D_m(\cdot)$ and $D_{m,n}(\cdot)$, and in particular when these subgroups may fail to be nilpotent.

\begin{rem}
If $m$ is a power of a prime $p$, then every element of $L_m(G)$ has $p$-power order. Consequently $D_m(G)$ and $D_{m,n}(G)$ consist of $p$-elements and, when finite, are $p$-groups. But if $m$ is not a power of a prime, then 
$D_m(G)$ is not necessarily a nilpotent group; for example,
if $G=S\times C_p$ where $S$ is a non-abelian finite simple group and  $p$ is a prime number such that $p\nmid |S|=m$, then $D_{m}(G)=S$ which is not a solvable group. Finally, if $G$ is finite with $|G|=mn$, $\gcd(m,n)=1$, and $n>1$ the subgroup $D_{m,n}(G)$ is nilpotent (see Theorem \ref{nil} and Corollary \ref{nil2}).
\end{rem}

We now arrive at a key result describing the precise group-theoretic structure carried by $D_{m,n}(G)$. In fact, these subgroups naturally arise as kernels of Frobenius group decompositions.

\begin{thm}\label{nil}
  Let $G$ be a finite group with order $mn$ and $\mathrm{gcd}(m,n)=1$. For any $x\in L_n(G)\setminus\{1\}$,
$ D_{m,n}(G)\langle x\rangle$ is a Frobenius group with kernel $D_{m,n}(G)$.
In addition, $D_{m,n}(G)\neq 1$ is a $\pi(m)$-Hall subgroup if and only if there exists a $\pi(n)$-Hall subgroup $H$ such that $L_{m}(G)H=D_{m,n}(G)\rtimes H$ is a Frobenius group.
\end{thm}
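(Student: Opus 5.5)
Write $K=D_{m,n}(G)$. By Lemma \ref{ces}, $K$ is a characteristic, hence normal, subgroup of $G$ consisting of $m$-elements. So $|K|$ divides $m$ and $\gcd(|K|,o(x))=1$ for every $x\in L_n(G)\setminus\{1\}$. In particular $K\cap\langle x\rangle=1$, and $K\langle x\rangle=K\rtimes\langle x\rangle$. If $K=1$ the Frobenius claim is degenerate, so we read it as a statement about the case $K\neq 1$.

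The plan for the first assertion is to show that every nontrivial element of $\langle x\rangle$ acts fixed-point-freely on $K$. This is the standard criterion for $K\rtimes\langle x\rangle$ to be Frobenius with kernel $K$ and complement $\langle x\rangle$. Take $1\neq u\in\langle x\rangle$, so $u\in L_n(G)\setminus\{1\}$, and suppose $k\in K$ commutes with $u$. Then $o(ku)=o(k)\,o(u)$ because the two orders are coprime. But $k\in D_{m,n}(G)$ gives $o(ku)\mid n$. Hence $o(k)\mid n$, and since $o(k)\mid m$ we get $k=1$. Thus $C_K(u)=1$ for all such $u$, and the first part follows.

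For the second assertion, first assume $K\neq1$ is a Hall $\pi(m)$-subgroup, so $|K|=m$. Since $K$ is normal of order $m$, every $m$-element lies in $K$ (the image of such an element in $G/K$, a group of order $n$, is trivial), so $L_m(G)=K$. By Schur--Zassenhaus there is a complement $H$ with $|H|=n$, which is a Hall $\pi(n)$-subgroup, and $L_m(G)H=K\rtimes H=G$. Every $1\neq h\in H$ lies in $L_n(G)\setminus\{1\}$, so the computation above gives $C_K(h)=1$. Hence $G=K\rtimes H$ is Frobenius with kernel $K$.

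Conversely, assume $H$ is a Hall $\pi(n)$-subgroup with $L_m(G)H=K\rtimes H$ Frobenius. Then $K\neq1$ as a Frobenius kernel. Since $L_m(G)\subseteq L_m(G)H=KH$ and the $m$-elements of $KH$ lie in the normal Hall subgroup $K$, we get $L_m(G)=K$. In particular every Sylow $p$-subgroup of $G$ with $p\mid m$ lies in $K$, so $|K|$ is divisible by the full $\pi(m)$-part of $|G|$, i.e.\ $|K|=m$. So $K$ is a Hall $\pi(m)$-subgroup.

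The only step that needs care is making the "Frobenius" conclusion in the first part precise, namely the convention for $K=1$ and the use of fixed-point-free action as the criterion. The rest follows directly from normality of $K$ and from the one-line computation $o(ku)=o(k)o(u)$ for commuting elements of coprime order.
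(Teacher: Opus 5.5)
Your proof is correct and follows essentially the same route as the paper. The first part is the same fixed-point-free argument via $o(ku)=o(k)o(u)$ for commuting elements of coprime order, and the equivalence uses Schur--Zassenhaus in one direction and $L_m(G)=D_{m,n}(G)$ in the other; you additionally make explicit points the paper leaves implicit, namely that $L_m(G)=D_{m,n}(G)$ also holds in the forward direction and that $D_{m,n}(G)\neq 1$ is needed for the Frobenius conclusion in the first part.
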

\begin{proof}
 Suppose that $D_{m,n}(G)\langle x\rangle$ is not a Frobenius group.
Then $C_{\langle x\rangle}(y)\neq \{1\}$ for some $y \in D_{m,n}(G) \setminus \{1\}$. Let $x^i\in C_{\langle x\rangle}(y)\setminus\{1\}$.
Then $o(yx^i)=o(y)o(x^i)\nmid n$, which is a contradiction, since $y \in D_{m,n}(G)$.
Hence $D_{m,n}(G)\langle x\rangle$ is a Frobenius group.

($\Longrightarrow$) 
If $D_{m,n}(G)\neq 1$ is a $\pi(m)$-Hall subgroup, then by Schur–Zassenhaus there exists a $\pi(n)$-Hall subgroup $H$ such that $G=D_{m,n}(G)\rtimes H$.
By the first part, for any $x\in H$, we have $D_{m,n}(G)\langle x\rangle$ is a Frobenius group; then the action of $H$ on $D_{m,n}(G)$ is fixed-point-free, and so $D_{m,n}(G)\rtimes H$ is a Frobenius group.

($\Longleftarrow$) 
If there exists a $\pi(n)$-Hall subgroup $H$ such that $L_m(G)H=D_{m,n}(G)\rtimes H$ is a Frobenius group, then $L_m(G)=D_{m,n}(G)$ is a $\pi(m)$-Hall subgroup of $G$.
\end{proof}
An immediate consequence of the previous theorem is the following structural description of $D_{m,n}(G)$.
\begin{cor}\label{nil2}
    Let $G$ be a finite group of order  $mn$ where $gcd(m,n)=1$ with $n>1$. 
    Then $D_{m,n}(G)$ is a characteristic nilpotent subgroup of $G$.
\end{cor}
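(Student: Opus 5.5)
Characteristicity comes straight from Lemma~\ref{ces}: part (ii) shows that $D_{m,n}(G)$ is a subgroup, and part (i) shows that it is invariant under every automorphism of $G$. It remains to prove nilpotency, and for this I would use Thompson's theorem that a Frobenius kernel is nilpotent.

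\emph{Trivial case.} If $D_{m,n}(G)=1$ there is nothing to prove. So assume $D_{m,n}(G)\neq 1$.

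\emph{Building a Frobenius group.} Since $n>1$, choose a prime $p\mid n$. As $|G|=mn$, Cauchy's theorem gives an element $x\in G$ of order $p$. Then $x\in L_n(G)\setminus\{1\}$. By Theorem~\ref{nil}, $D_{m,n}(G)\langle x\rangle$ is a Frobenius group with kernel $D_{m,n}(G)$. Thompson's theorem then says $D_{m,n}(G)$ is nilpotent.

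\emph{Checking the Frobenius structure is genuine.} One point needs care so that Thompson's theorem really applies: the complement $\langle x\rangle$ must be nontrivial and must meet the kernel trivially. This holds because $D_{m,n}(G)\le L_m(G)$, so $D_{m,n}(G)$ consists of $m$-elements, while $x$ is an $n$-element and $\gcd(m,n)=1$. For the same reason $D_{m,n}(G)\langle x\rangle$ is a semidirect product, since $D_{m,n}(G)$ is normal in $G$.

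\emph{Main obstacle.} The main dependence is on Thompson's deep theorem. If one wanted to avoid it, the fixed-point-free automorphism of prime order $p$ induced by $x$ would still be the key input, and the result would again follow from Thompson's theorem on fixed-point-free automorphisms of prime order.
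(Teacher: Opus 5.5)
Your proposal is correct and follows the paper's proof. Characteristicity comes from Lemma~\ref{ces}, and nilpotency comes from $D_{m,n}(G)$ being the kernel of the Frobenius group $D_{m,n}(G)\langle x\rangle$ of Theorem~\ref{nil} together with Thompson's theorem; you are slightly more careful than the paper in treating the case $D_{m,n}(G)=1$ separately (where no genuine Frobenius group exists) and in using Cauchy's theorem to produce the required $x\in L_n(G)\setminus\{1\}$.
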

\begin{proof}
It follows from Lemma \ref{ces} that $D_{m,n}(G)$ (and $D_m(G)$) are characteristic subgroups of $G$. The nilpotency of $D_{m,n}(G)$ is due to the fact that $D_{m,n}(G)$ is the kernel of a Frobenius group. 
\end{proof}

To proceed, we recall two useful facts about orders of products in semidirect decompositions, especially in the Frobenius setting.

\begin{lem}\label{2.7}
    Let $G = K \rtimes H$ be a group with $K \trianglelefteq G$ and $K \cap H =1$. 
\begin{itemize}
    \item [$(a)$] For every $x \in K$ and $y \in H$ we have that $o(y) \mid o(xy)$. 
    \item [$(b)$] If $G$ is a Frobenius group with kernel $K$ and complement $H$, $o(xy) = o(y)$ for every $x \in K$ and $y \in H\setminus \{1\}$. 
\end{itemize}
\begin{proof}
    \begin{itemize}
        \item [$(a)$] Let $o(xy) = m$ and $o(y) = n$. We have that
    \begin{eqnarray*}
    1 &=& (yx)^m = yxyx \cdots yx \\
    &=& yxy^{-1}yyxy^{-2}y^2x \cdots y^{m-1}yxy^{-m}y^m \\
    &=& x^y x^{y^2} \cdots x^{y^m} y^m \implies z:= x^y x^{y^2} \cdots x^{y^m} = y^{-m} \in H.
    \end{eqnarray*}
However, since $x \in K \trianglelefteq G$, $z \in K$. In this case, $z \in K \cap H =1$. Hence, $y^m =1$, which shows that $o(y) \mid m = o(xy)$.
\item [$(b)$] Consider $n = o(y)$. Then 
  \begin{eqnarray*}
    (xy)^n &=& xx^{y^{-1}}x^{y^{-2}}\cdots x^{y^{-(n-1)}}y^n \\
           &=& xx^{y^{-1}}x^{y^{-2}}\cdots x^{y^{-(n-1)}} \in K,
    \end{eqnarray*}
since $K \trianglelefteq G$. Thus, $(xy)^n \in K$, that is, $o((xy)^n) \mid |K|$. On the other hand, since $H \cap K = 1$ and $y \neq 1$, then $xy \not\in K$, because if $xy \in K$, then $x^{-1}xy = y \in K$, which would imply $y \in K \cap H = 1$. Hence, as
$$K = G \setminus \bigg(\bigcup_{g \in G} H^g \bigg) \implies xy \in \bigcup_{g \in G} H^g.$$
So, there exists $g \in G$ such that $xy \in H^g$, that is, $o(xy) \mid |H^g|$. However, $o((xy)^n) \mid o(xy)$, since $(xy)^n \in \langle xy \rangle$, thus,
$$o((xy)^n) \mid |H| \implies o((xy)^n) \mid \gcd(|H|, |K|) = 1 \implies o((xy)^n) = 1 $$
$$ \implies (xy)^n = 1 \implies o(xy) \mid n = o(y).$$
However, from item $(a)$, we have that $o(y) \mid o(xy)$. Therefore, $o(xy) = o(y)$.
    \end{itemize}
\end{proof}
\end{lem}

We now combine the preceding lemma with the structure of $D_{m,n}(G)$ to obtain a characterization reminiscent of the result of Moretó and Saéz \cite{More}.

\begin{thm}\label{Fro1}
Let $G$ be a finite group of order $mn$ with $\gcd(m,n)=1$. For all $x\in L_m(G)$ and $y\in L_n(G)\setminus \{1\}$ with $\gcd(o(x),o(y))=1$ we have $\pi(o(xy))\subseteq \pi(o(y))$ if and only if $G$ is a Frobenius group with complement of order $n$.  
\end{thm}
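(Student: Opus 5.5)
The plan is to reduce both directions to results already established: the forward direction to Theorem \ref{nil}, and the converse to Lemma \ref{2.7}(b). Throughout I assume $m>1$ and $n>1$, since otherwise neither a Frobenius kernel nor a complement of the required order can exist; the degenerate cases would be handled separately by convention.

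\emph{($\Longleftarrow$)} Suppose $G$ is a Frobenius group with kernel $K$ and complement $H$ with $|H|=n$, so $|K|=m$.
\begin{itemize}
\item[(1)] Since $K$ is a normal Hall $\pi(m)$-subgroup, every $m$-element lies in $K$, so $L_m(G)=K$.
\item[(2)] Let $y\in L_n(G)\setminus\{1\}$. Since $\langle y\rangle$ is a $\pi(n)$-subgroup, $y$ lies in some conjugate $H^g$. Moreover $G=K\rtimes H^g$ is again a Frobenius decomposition.
\item[(3)] By Lemma \ref{2.7}(b), $o(xy)=o(y)$ for every $x\in K$. Hence $\pi(o(xy))=\pi(o(y))$, which is stronger than required.
\end{itemize}

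\emph{($\Longrightarrow$)} Assume the order condition holds.
\begin{itemize}
\item[(1)] Take $x\in L_m(G)$ and $y\in L_n(G)\setminus\{1\}$. Then $\gcd(o(x),o(y))=1$ automatically, and $\pi(o(xy))\subseteq\pi(o(y))\subseteq\pi(n)$.
\item[(2)] Since $o(xy)$ divides $|G|=mn$ and $\gcd(m,n)=1$, it follows that $o(xy)\mid n$.
\item[(3)] Thus every $x\in L_m(G)$ lies in $D_{m,n}(G)$, so $L_m(G)=D_{m,n}(G)$.
\end{itemize}
By Lemma \ref{ces} this set is a (characteristic) subgroup. By the definition of $D_{m,n}(G)$ its elements are $m$-elements, so it is a $\pi(m)$-group and its order divides $m$. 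On the other hand, it contains every Sylow $p$-subgroup for each $p\mid m$, since those consist of $m$-elements. Hence $|D_{m,n}(G)|=m$, so $D_{m,n}(G)\neq 1$ is a normal Hall $\pi(m)$-subgroup.

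Theorem \ref{nil} then gives a $\pi(n)$-Hall subgroup $H$, of order $n$, such that
\[
G=L_m(G)H=D_{m,n}(G)\rtimes H
\]
is a Frobenius group. This is exactly a Frobenius group with complement of order $n$.

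The main point needing care is step (2) of the forward direction: the condition on prime sets yields the divisibility $o(xy)\mid n$ only because $|G|=mn$ with $\gcd(m,n)=1$. A second point is the justification that $L_m(G)=D_{m,n}(G)$ is a genuine Hall subgroup; this requires both its subgroup structure from Lemma \ref{ces} and the Sylow containment argument. Once these are in place, everything else follows directly from Theorem \ref{nil} and Lemma \ref{2.7}.
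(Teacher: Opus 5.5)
Your proof is correct and follows essentially the same route as the paper. In the forward direction you show that $L_m(G)=D_{m,n}(G)$ is a normal Hall $\pi(m)$-subgroup and then use the fixed-point-free action from Theorem \ref{nil}; in the converse you apply Lemma \ref{2.7}(b), and there you are more careful than the paper in noting $L_m(G)=K$ and placing $y$ in a conjugate of $H$. Your restriction to $m,n>1$ is a genuine necessity rather than a convention: for $m=1$ or $n=1$ the order condition holds trivially or vacuously, yet $G$ is not Frobenius, and the paper's proof tacitly assumes this restriction as well.
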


\begin{proof} 

($\Longrightarrow$) 
    Suppose that for any $m$-element $x$ and $n$-element $y\neq 1$ of $G$, we have $\pi(o(xy))\subseteq \pi(o(y))$.
Let $x\in L_m(G)$, and let $y\in L_n(G)\setminus\{1\}$.
We have $\pi(o(xy))\subseteq \pi(o(y))\subseteq \pi(n)$.
Hence $o(xy)\mid n$. It follows that $D_{m,n}(x)=L_{n}(G)\setminus\{1\}$, and so 
$D_{m,n}(G)=L_m(G)$. 
Thus $D_{m,n}(G)$ is the set of all $m$-elements.

From Corollary \ref{nil2}, $D_{m,n}(G)$ is a characteristic nilpotent subgroup of $G$; since it contains every $p$-Sylow subgroup of $G$ with $p\mid m$, it follows that $|D_{m,n}(G)|=m$, i.e.\ $D_{m,n}(G)$ is a $\pi(m)$-Hall subgroup of $G$. By Schur--Zassenhaus there exists a complement $H$ with $|H|=n$, and from Theorem \ref{nil} the action of $H$ on $D_{m,n}(G)$ is fixed-point-free on nontrivial elements; hence $G=D_{m,n}(G)\rtimes H$ is a Frobenius group with complement of order $n$.

($\Longleftarrow$)  
    If $G=K\rtimes H$ is a Frobenius group with kernel $K$ of order $m$, let $x\in K$ and $y\in L_n(G)\setminus \{1\}$.
    Then $o(xy)=o(y)$, and so $\pi(o(xy))\subseteq \pi(o(y))$.
\end{proof}

Having characterized when $G$ itself is a Frobenius group, we next study the consequences of factoring out $D_{m,n}(G)$ and the restrictions this imposes on Sylow subgroups.

\begin{prop}\label{factor}
Let $G$ be a finite group with order $mn$ where $\gcd(m,n)=1$. 
\begin{itemize}
    \item[(i)] Then $D_{m,n}\bigl(G/D_{m,n}(G)\bigr)=1$.

\item[(ii)] Suppose $D_{m,n}(G)\neq 1$, and let $P\in \mathrm{Syl}_p(G)$ where $p\mid n$.
Then $P$ is cyclic or generalized quaternion. Furthermore, suppose $p=2\mid n$ and either $P\ncong Q_{2^t}$ for some $t\geq 3$ or $3\nmid |G|$. Then $G=KH$ where $K$ is a Hall $\pi(m)$-subgroup, and $H$ a Hall $\pi(n)$-subgroup of $G$.
    
\end{itemize}
\end{prop}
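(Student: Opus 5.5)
Write $D=D_{m,n}(G)$ and $\bar G=G/D$. My plan is to treat the two parts separately, using the Frobenius structure from Theorem \ref{nil} and Lemma \ref{2.7}(b) throughout.

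\textbf{Part (i).} I will argue by contradiction and lift a nontrivial element of $D_{m,n}(\bar G)$ back into $D$. Suppose $\bar x=xD\in D_{m,n}(\bar G)$ with $\bar x\ne 1$.

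Since $D$ is a $\pi(m)$-group, I can take $x$ to be an $m$-element. To see this, write $x=x_mx_n$ as commuting $\pi(m)$- and $\pi(n)$-parts. Then $\bar x_n$ is an $n$-part of the $m$-element $\bar x$, so $\bar x_n=1$, i.e.\ $x_n\in D$. As $D$ is a $\pi(m)$-group, this forces $x_n=1$.

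Now fix $y\in L_n(G)\setminus\{1\}$. Then $\bar y\ne 1$, because $D\cap L_n(G)=1$. Hence $o(\bar x\bar y)\mid n$, so $(xy)^{n}\in D$.

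Consider $H_0=D\langle xy\rangle$. The element $z=(xy)^n$ lies in $D$, so $xy=z^{k}w$ for commuting parts $z'$ (a $\pi(m)$-element in $D$) and $w$ (a $\pi(n)$-element). Since $\bar x\ne 1$, we have $w\ne 1$. By Theorem \ref{nil}, $D\langle w\rangle$ is Frobenius with kernel $D$. Then $xy\in D\langle w\rangle$, and Lemma \ref{2.7}(b) gives $o(xy)=o(w)\mid n$. Thus $xy$ is an $n$-element with $\overline{xy}\ne1$.

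The aim is to conclude $x\in D$, i.e.\ $o(xy)\mid n$ for every nontrivial $n$-element $y$. That contradicts $\bar x\ne 1$. The delicate point is that the decomposition above must genuinely give $xy\in D\langle w\rangle$. This holds because $\langle xy\rangle D/D$ is a $\pi(n)$-group, so $xy$ lies in $D\langle w\rangle$ with $w$ a Hall $\pi(n)$-generator of $\langle xy\rangle$. Consequently the $\pi(m)$-part of $xy$ lies in $D$ and commutes with $w$. By the Frobenius property this $\pi(m)$-part is trivial. So $o(xy)\mid n$ for all such $y$, whence $x\in D$, contradiction.

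\textbf{Part (ii).} For $P\in\mathrm{Syl}_p(G)$ with $p\mid n$, Theorem \ref{nil} shows that every nontrivial element of $P$ acts fixed-point-freely on $D\neq 1$. Hence $DP$ is a Frobenius group with complement $P$. By the classical structure of Frobenius complements, every abelian subgroup of $P$ is cyclic, so $P$ is cyclic or generalized quaternion.

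For the Hall decomposition I will exploit the Sylow structure of $\pi(n)$-elements. Every Sylow $q$-subgroup with $q\mid n$ is cyclic or generalized quaternion. If $2\mid n$ and the Sylow $2$-subgroup is cyclic, Burnside's transfer theorem gives a normal $2$-complement. If it is generalized quaternion and $3\nmid|G|$, the relevant automorphisms have $2$-power or $3$-power order, and the Frobenius normal $p$-complement theorem (a $p'$-automorphism of $Q_8$ needs a $3$) gives a normal $2$-complement. Iterating on the smallest prime divisor, the $q$-subgroups with cyclic Sylow subgroups yield, via Burnside, a tower of normal complements for the primes dividing $n$.

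The main obstacle is assembling this into $G=KH$. I would show that $G$ is $\pi(n)$-separable enough to have Hall subgroups for both $\pi(m)$ and $\pi(n)$, and then conclude $G=KH$ by order counting, $|K||H|=mn$. The step I expect to fail without more work is the existence of a Hall $\pi(m)$-subgroup $K$. There I would take $K$ to be the normal complement of the $\pi(n)$-part obtained from the iterated transfer.
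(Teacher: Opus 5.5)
Your part (i) is correct, and more direct than the paper's proof. The paper runs an induction on $|G|$ to reduce to $G=\langle z,y\rangle N$, and only then uses essentially your key step: the $\pi(m)$-part of $xy$ is a power of $(xy)^n\in D$, it commutes with the nontrivial $\pi(n)$-part $w$, and so it is trivial because $w$ acts fixed-point-freely on $D$ (Theorem \ref{nil}). Your version needs no induction. Your argument for the first assertion of (ii) ($DP$ is Frobenius, so $P$ is cyclic or generalized quaternion) is exactly the paper's. Your construction of a normal $2$-complement $T$ also matches the paper's appeal to Burnside and \cite{quat}: Burnside when $P$ is cyclic, and Frobenius' normal $p$-complement theorem when $P$ is generalized quaternion and $3\nmid |G|$.

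The genuine gap is the passage from $T$ to $G=KH$, which you leave open. The proposed ``tower of normal complements for the primes dividing $n$'' does not follow from Burnside. His theorem applies only when a Sylow $q$-subgroup is central in its normalizer (e.g.\ cyclic with $q$ the least prime divisor of the order). After removing $2$, this can fail in two ways. An odd $q\in\pi(n)$ may be normalized nontrivially by $\pi(m)$-elements. And if the least remaining prime lies in $\pi(m)$, its Sylow subgroup need not be cyclic. In fact a normal $\pi(n)$-complement need not exist. Take a Frobenius group $G=N\rtimes(C_7\rtimes C_{18})$ in which $C_{18}$ acts on $C_7$ through an automorphism of order $3$, with $\pi(n)=\{2,7\}$. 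Then $N\le D_{m,n}(G)$ by Lemma \ref{2.7}(b), and the Sylow $2$-subgroup is cyclic. But the Hall $\pi(m)$-subgroup $NC_9$ is not normal, so the $K$ you intend to take does not exist in general. The missing idea is solvability. $T$ has odd order, so it is solvable by the Feit--Thompson theorem; since $G/T$ is a $2$-group, $G$ is solvable. Hall's theorem then gives a Hall $\pi(m)$-subgroup $K$ and a Hall $\pi(n)$-subgroup $H$. As $K\cap H=1$, we get $|KH|=|K||H|=mn=|G|$, so $G=KH$. The paper's one-line appeal to Burnside and \cite{quat} tacitly relies on this same step.
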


\begin{proof}
Let $N=D_{m,n}(G)$.
(i) We proceed by induction on $|G|$.
 
Let $zN\in D_{m,n}\bigl(G/N\bigr)$ be of prime order $p$, and let $y\in L_n(G)\setminus\{1\}$ be such that $o(zy)\nmid n$.
Let $H=\langle z,y\rangle N$.
By the induction hypothesis, if $H\neq G$, then $D_{m,n}\!\left(H/D_{m,n}(H)\right) =1$. Let $\overline{N} = D_{m,n}(H)$. Clearly, $\overline{N} \le N$. Moreover, $z \notin \overline{N}$, because $y \in H \cap L_n(G) \setminus \{1\}$ and $o(zy) \nmid n$. However, $z\overline{N} \in D_{m,n}\!\left(H/\overline{N}\right)$. In fact, since $zN \in D_{m,n}\!\left(G/N\right)$, we have that $o(zxN) \mid n$ for every $xN \in L_n\!\left(G/N\right) \setminus \{N\}$. Hence, $o(zx\overline{N})$ also divides $n$ for every $x\overline{N} \in L_n\!\left(H/\overline{N}\right) \setminus \{\overline{N}\}$, because $\overline{N} \le N$. Thus, $D_{m,n}\!\left(H/D_{m,n}(H)\right) \neq 1$, which implies that $H=G$. 
Since $D_{m,n}\!\left(G/N\right) \subset L_m\!\left(G/N\right)$, we have that 
$D_{m,n}(G/N)=\dfrac{N\langle z\rangle }{N}$, so $N\langle z\rangle$ is a characteristic subgroup of $G$.
From Proposition \ref{nil}, $N\langle z\rangle$ is a Frobenius group with complement $M:=\langle y\rangle$.
Since $o(zy)\nmid n$, there exists a prime number $q$, divisor of $m$, such that $q\mid o(zy)$. 
Then $zy=uv=vu$ where $\pi(o(u))=\{q\}$ and $q\nmid o(v)$.
Since $q\nmid o(zyN)$, we have $u\in N$. Therefore, $C_M(u)=1$.
Finally, since $u=(zy)^{t}$ for some integer $t$, we have $C_M(zy)\leq C_M(u)=1$, so $v=1$, and hence $o(zy)\mid m$, which is a contradiction. 
 
 (ii)
Let $P\in \mathrm{Syl}_p(G)$ where $p\mid n$.
Then $D_{m,n}(G)P$ is a Frobenius group, and so
$P$ is cyclic or generalized quaternion. Now, suppose $p=2 \mid n$. If $P\ncong Q_{2^t}$ for some $t\geq 3$, then $P$ is cyclic and the result follows by Burnside's $p$-complement theorem. If $3 \nmid |G|$, from the main theorem of \cite{quat} we obtain $G=K\rtimes H$ where $H$ is a $\pi(n)$-Hall subgroup.
\end{proof} 

Remember that a group $G$ is called a 2-Frobenius group if there exist normal subgroups $L,K \trianglelefteq G$ such that $L$ is a Frobenius group with kernel $K$ and $G/K$ is a Frobenius group with kernel $L/K$.

\begin{thm}\label{frob}
    Let $G$ be a finite group. If $D_{n,m}\bigl(G/D_{m,n}(G)\bigr)=U/D_{m,n}(G)$ and $D_{m,n}\bigl(G/U\bigr)=V/U$, then 
     \begin{itemize}
         \item  [(a)] $D_{m,n}\bigl(G/V\bigr)=1.$
        \item [(b)] If $U\neq V$, then $V$ is a $2$-Frobenius group, and if $U\neq D_{m,n}(G)$, then $U$ is a Frobenius group. 
     \end{itemize}
\end{thm}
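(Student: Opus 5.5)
Throughout we write $N=D_{m,n}(G)$, and we assume implicitly that $|G|=mn$ with $\gcd(m,n)=1$. This is needed so that Theorem \ref{nil} and Proposition \ref{factor} apply to $G$ and to all its quotients.

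\emph{Part (a).} The natural tool is Proposition \ref{factor}(i) applied to $G/U$. By definition $V/U=D_{m,n}(G/U)$, so Proposition \ref{factor}(i), applied to the group $G/U$ (whose order is still a product of an $m$-part and an $n$-part), gives
\[
D_{m,n}\bigl((G/U)/(V/U)\bigr)=1 .
\]
Since $(G/U)/(V/U)\cong G/V$ and $D_{m,n}(\cdot)$ is invariant under isomorphism (Lemma \ref{ces}(i)), we get $D_{m,n}(G/V)=1$. So (a) is essentially immediate once the hypotheses on $|G|$ are made explicit.

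\emph{Frobenius structure of $U$.} Suppose $U\neq N$. By Theorem \ref{nil} applied to $\bar G=G/N$ with the roles of $m$ and $n$ swapped, $U/N=D_{n,m}(\bar G)$ is a nilpotent $\pi(n)$-group. Moreover, for every $1\neq \bar x\in L_m(\bar G)$ the group $(U/N)\langle\bar x\rangle$ is Frobenius with kernel $U/N$. The target is to show that $U=N\rtimes Q$ is a Frobenius group with kernel $N$, where $Q$ is a complement to $N$ in $U$ given by Schur--Zassenhaus (recall $N$ is a $\pi(m)$-group and $U/N$ a $\pi(n)$-group).
\begin{itemize}
\item If $N\neq 1$: by Theorem \ref{nil}, each $1\neq q\in Q$ acts fixed-point-freely on $N$, since $N\langle q\rangle$ is Frobenius with kernel $N$. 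Hence $Q$ acts fixed-point-freely on $N$, and $U=N\rtimes Q$ is Frobenius.
\item If $N=1$: then $U=D_{n,m}(G)$ is nilpotent, and "Frobenius" has to be read as allowing the degenerate case; I will treat this as trivially excluded or absorbed into the statement.
\end{itemize}

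\emph{$2$-Frobenius structure of $V$.} Suppose $U\neq V$. Then $V/U=D_{m,n}(G/U)$ is a nontrivial nilpotent $\pi(m)$-group, and for every $1\neq \bar y\in L_n(G/U)$ the group $(V/U)\langle \bar y\rangle$ is Frobenius with kernel $V/U$. The plan is to take $L=U$ and $K=N$ in the definition of a $2$-Frobenius group:
\begin{itemize}
\item $U$ is Frobenius with kernel $N$, by the previous step; here one must argue $U\neq N$. If $U=N$, then $V/N=D_{m,n}(G/N)=1$ by Proposition \ref{factor}(i), so $V=N=U$, contradicting $U\neq V$.
\item $V/N$ must be Frobenius with kernel $U/N$. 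Write $V/N=(U/N)\rtimes \bar R$ with $\bar R$ a $\pi(m)$-Hall subgroup of $V/N$, isomorphic to $V/U$. For $1\neq\bar r\in\bar R$, the element $\bar r$ is an $m$-element of $G/N$, so Theorem \ref{nil} (for $D_{n,m}$ in $G/N$) says $(U/N)\langle \bar r\rangle$ is Frobenius. Hence $\bar r$ acts fixed-point-freely on $U/N$, and $V/N$ is Frobenius with kernel $U/N$.
\end{itemize}
Thus $V$ is $2$-Frobenius with the chain $N\le U\le V$.

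\emph{Main obstacle.} The delicate points are bookkeeping rather than depth:
\begin{itemize}
\item ensuring the relevant $D$'s are nontrivial, namely $N\neq1$ when claiming $U$ (and hence $V$) is Frobenius;
\item checking that the hypothesis "$|G|=mn$" passes to quotients so that the earlier results apply.
\end{itemize}
If $N=1$ and $U\neq V$, then $U$ is nilpotent and $V$ is only Frobenius, not $2$-Frobenius. I would first try to rule this out using the definition of $D_{m,n}$ together with Theorem \ref{nil}. Failing that, I would note that the statement must be read with $N\neq1$, which is the case where $D_{m,n}(G)$ is a genuine Frobenius kernel in $U$.
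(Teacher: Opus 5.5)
Your proof is correct and follows the paper's route. Part (a) is Proposition \ref{factor}(i) applied to $G/U$, and part (b) is Schur--Zassenhaus plus fixed-point-freeness from the defining order condition; the paper checks this by hand via $o(ah)=o(a)o(h)$, while you quote Theorem \ref{nil}. Your observation that $U\neq V$ forces $U\neq D_{m,n}(G)$ fills a step the paper skips.

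The case $D_{m,n}(G)=1$ cannot be ruled out. For $G=S_4$, $m=3$, $n=8$ one gets $D_{3,8}(S_4)=1$, $U=C_2\times C_2$ and $V=A_4$, so $U$ is not Frobenius and $V$ is Frobenius but not $2$-Frobenius. The statement therefore needs the hypothesis $D_{m,n}(G)\neq 1$, which the paper's proof also uses silently; under that hypothesis your argument is complete.
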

\begin{proof}
\begin{itemize}
    \item [(a)] By applying Proposition \ref{factor} (i) to $G/U$, we have
$$
 1= D_{m,n}\left(\dfrac{G/U}{D_{m,n}(G/U)}\right) = D_{m,n}\left(\dfrac{G/U}{V/U}\right) \implies D_{m,n}\left(\dfrac{G}{V}\right) = 1,
$$
as desired. 
\item [(b)] If $U\neq D_{m,n}(G)$, then $D_{n,m}\bigl(G/D_{m,n}(G)\bigr) = U/D_{m,n}(G) \neq 1$. 
For $x \in D_{m,n}(G)$, we have $\pi(o(x)) \subset \pi(m)$. On the other hand, 
$U/D_{m,n}(G)$ is a $\pi(n)$-group. Once $(m,n) = 1$, if $p \mid |D_{m,n}(G)|$ is a prime number, then $p \nmid |U|/|D_{m,n}(G)|$. Therefore, $|D_{m,n}(G)|$ and $|U|/|D_{m,n}(G)|$ are coprime. Hence, $D_{m,n}(G)$ is a Hall subgroup of $U$. By Schur–Zassenhaus, there exists a complement $H \leq U$ such that $U = D_{m,n}(G) \rtimes H$. 

Let $1 \neq h \in H$ and $1 \neq a \in D_{m,n}(G)$ with $a \in C_{D_{m,n}(G)}(h)$.
Since $a \in D_{m,n}(G)$, $o(a) \mid m$. Once $h$ is an $n$-element, it holds that $o(ah) \mid n$. As $a$ and $h$ commute, $o(ah) = \mathrm{lcm}(o(a), o(h)) = o(a)o(h)$. Consequently, there exists $q \in \pi(m)$ such that $q \mid o(ah) \mid n$, which is a contradiction. So, $a = 1$, which shows that $C_{D_{m,n}(G)}(h) = 1$ for every $h \in H \setminus \{1\}$. Hence, $U$ is a Frobenius group. 

If $U \neq V$, then $D_{m,n}\bigl(G/U\bigr) = V/U \neq 1$. 
Let $A = D_{m,n}(G)$ and consider the quotient $\overline{G} = G/A$, 
and denote $\overline{U} = U/A$ and $\overline{V} = V/A$. 
By construction, $\overline{U}$ is a $\pi(n)$-group and
$\overline{V}/\overline{U} = V/U$ is a $\pi(m)$-group. 
Since $(m,n)=1$, it follows that $|\overline{U}|$ and $|\overline{V}|/|\overline{U}|$ are coprime. 
Therefore, $\overline{U}$ is a Hall subgroup of $\overline{V}$. 
By Schur–Zassenhaus, there exists a complement $K \leq \overline{V}$ 
such that $\overline{V} = \overline{U} \rtimes K$. 

Let $1 \neq k \in K$ and $1 \neq u \in \overline{U}$ with $u \in C_{\overline{U}}(k)$. 
Since $u \in \overline{U}$, we have $o(u) \mid n$. 
On the other hand, $k$ is an $m$-element, so $o(k) \mid m$. 
Since $(m,n)=1$, we conclude that $\gcd(o(u),o(k))=1$. 
Furthermore, as $u$ and $k$ commute, 
$$o(uk) = \operatorname{lcm}(o(u),o(k)) = o(u)o(k).$$

However, $o(uk) \mid m$. 
But $o(uk)=o(u)o(k)$ has prime divisors both in $\pi(n)$ and in $\pi(m)$, 
which contradicts the coprimality between $m$ and $n$. Consequently, $u=1$, and $C_{\overline{U}}(k)=1$ for all $k \in K \setminus \{1\}$. 

This shows that $\overline{V} = \overline{U} \rtimes K$ is a Frobenius group with kernel $\overline{U} = U/A$. Since $U$ is a Frobenius group with kernel $A$, we conclude that $V$ is a 2-Frobenius group. 
\end{itemize}
\end{proof}

The next result links our construction to the theory of LC-nilpotent groups, showing that these two frameworks interact in a predictable way when 
$m$ is a prime power.

\begin{prop} Let $G$ be a finite group and let $m$ be a power of a prime $p$. Then, $LC_p(G) \le O_p(D_m(G))$.
\end{prop}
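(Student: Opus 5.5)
The plan is to show two things: first, that $LC_p(G)$ is contained in $D_m(G)$; second, that $LC_p(G)$ is a normal $p$-subgroup of $D_m(G)$. Since $O_p(D_m(G))$ is the largest normal $p$-subgroup of $D_m(G)$, the inclusion $LC_p(G)\le O_p(D_m(G))$ then follows at once. I take $|G|$ to be a divisor of $mn$ with $m=p^a$ and $p\nmid n$. I also take $m$ large enough that $L_m(G)$ is exactly the set of $p$-elements of $G$. If $m$ is smaller, the argument is identical with "$p$-element" replaced by "$m$-element" throughout.

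Everything below relies on one property of $LC_p(G)$ from \cite{Amiri}, and I have not checked its exact form there. It has to be verified from the definition in that paper before the plan can be carried out. What I need is the following: an element $x\in LC_p(G)$ is a $p$-element, and for every $p$-element $y\in G$ the order of the product is controlled by the orders of the factors, for instance $o(xy)\mid \mathrm{lcm}(o(x),o(y))$. I also need the structural facts proved in \cite{Amiri}: that $LC_p(G)$ is a subgroup and is characteristic in $G$. If the published definition is phrased with $p'$-elements rather than $p$-elements, the first step changes, and I comment on that at the end.

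\emph{Step 1: $LC_p(G)\subseteq D_m(G)$.} Take $x\in LC_p(G)$ and $y\in L_m(G)$. By the property above, $o(xy)$ divides $\mathrm{lcm}(o(x),o(y))$, which is a power of $p$ dividing $m$. So $o(xy)\mid m$ for every $y\in L_m(G)$. In the notation of Definition \ref{def2} this means $D_m(x)=L_m(G)$, and hence $x\in D_m(G)$.

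\emph{Step 2: $LC_p(G)$ is a normal $p$-subgroup of $D_m(G)$.} By Lemma \ref{ces}(ii), $D_m(G)$ is a subgroup. By Step 1, $LC_p(G)\le D_m(G)$. As a subgroup consisting of $p$-elements, $LC_p(G)$ is a $p$-group. Because $LC_p(G)$ is characteristic, hence normal, in $G$, it is normal in the subgroup $D_m(G)$. Therefore $LC_p(G)\le O_p(D_m(G))$.

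The main obstacle is Step 1, that is, getting the divisibility $o(xy)\mid m$ out of the definition of $LC_p(G)$. If the definition of $LC$ only controls products of $x$ with elements of coprime order, my first attempt would be as follows. Use that $LC_p(G)$ is a normal $p$-subgroup of $G$, so $LC_p(G)\le O_p(G)$. Then for any $p$-element $y$, the subgroup $O_p(G)\langle y\rangle$ is a $p$-group. Consequently $xy$ is a $p$-element, so $o(xy)\mid m$, and Step 1 goes through. The whole argument therefore reduces to knowing that $LC_p(G)$ is a normal $p$-subgroup of $G$, and I would take this from \cite{Amiri}.
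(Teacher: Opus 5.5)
Your Steps 1 and 2 are exactly the paper's proof: membership in $LC(G)$ gives $o(xy)\mid \mathrm{lcm}(o(x),o(y))\mid m$ for every $y\in L_m(G)$, so $LC_p(G)\le D_m(G)$, and normality of the $p$-group $LC_p(G)$ in $G$ (\cite[Lemma 2.3]{Amiri}) then yields $LC_p(G)\le O_p(D_m(G))$. Your $O_p(G)$ fallback is also valid and shows more: $O_p(G)\le D_m(G)$, and since $O_p(D_m(G))$ is characteristic in $G$ this gives $O_p(D_m(G))=O_p(G)$.

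One correction: your aside that the argument is identical when $m$ is smaller than the $p$-part of $\exp(G)$ is wrong. In that case elements of $LC_p(G)$ need not be $m$-elements, while $D_m(G)\subseteq L_m(G)$. So the standing assumption that $L_m(G)$ consists of all $p$-elements is genuinely needed; the paper also uses it implicitly when it asserts $x\in L_m(G)$.
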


\begin{proof} Let $x \in LC_p(G)$. By \cite[Proposition 2.4]{Amiri}, $x \in LC(G)$. Since $m$ is a power of $p$, $x \in L_m(G)$. For $y \in L_m(G)$, it holds that
$$
o(xy)\mid \operatorname{lcm}(o(x),o(y)) \mid m.
$$
Thus, $y \in D_m(x)$ and hence $x \in D_m(G)$. So, $LC_p(G)\le D_m(G)$. Finally, since $LC_p(G) \trianglelefteq G$ (see \cite[Lemma 2.3]{Amiri}), it follows that $LC_p(G)\leq O_p(D_{m}(G))$. 
\end{proof}

We end the section with a nilpotency criterion that depends only on the behavior of $D_{m,n}(\cdot)$ inside subgroups of $G$.
\begin{thm}\label{Min1}
Let $G$ be a finite group with $|G|=mn$, where $m$ and $n$ are coprime and $n>1$. Suppose that for every $M \le G$, we have
$$D_{m,n}\Bigg(\dfrac{M}{Z(M)}\Bigg)=1.$$
Then, $G$ is nilpotent.
\end{thm}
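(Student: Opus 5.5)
We argue by a minimal counterexample. Let $G$ be a counterexample of least order. The hypothesis is inherited by subgroups: if $H\le G$, every $M\le H$ is also a subgroup of $G$. Here we read the parameters $m,n$ as being replaced by the $\pi(m)$- and $\pi(n)$-parts of $|H|$. So every proper subgroup of $G$ is nilpotent, and $G$ is a minimal non-nilpotent (Schmidt) group. By Schmidt's theorem, $G=P\rtimes Q$ where:
\begin{itemize}
\item $P$ is a normal Sylow $p$-subgroup and $Q=\langle y\rangle$ is a cyclic Sylow $q$-subgroup;
\item $Q^{q}\le Z(G)$ and $Z(G)=\Phi(G)=\Phi(P)\times Q^{q}$;
\item $Q/Q^{q}$ acts irreducibly and faithfully on $P/\Phi(P)$.
\end{itemize}
In particular $\overline{G}=G/Z(G)=\overline{P}\rtimes\overline{Q}$ with $\overline{P}\cong P/\Phi(P)$ elementary abelian and $|\overline{Q}|=q$. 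The nontrivial element of order $q$ acts without nontrivial fixed points on $\overline{P}$, since $C_{\overline P}(\overline Q)$ is a proper $\overline Q$-submodule and the action is irreducible. Hence $\overline{G}$ is a Frobenius group with kernel $\overline{P}$ and complement $\overline{Q}$ of prime order $q$.

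Now apply the hypothesis with $M=G$. Suppose first that $p\mid m$ and $q\mid n$. Every nontrivial $n$-element of $\overline G$ lies in a conjugate of $\overline{Q}$, because $\overline{G}\setminus\overline{P}$ is the union of the conjugates of the complement minus $1$. Take $1\neq x\in\overline{P}$ and $\overline{u}\in L_n(\overline G)\setminus\{1\}$, say $\overline u\in\overline Q^{\,g}$. Lemma~\ref{2.7}(b), applied to the Frobenius decomposition $\overline P\rtimes\overline Q^{\,g}$, gives $o(x\overline u)=o(\overline u)\mid n$. Thus $\overline P\le D_{m,n}(\overline G)$. This contradicts $D_{m,n}(G/Z(G))=1$, since $\overline P\neq 1$ because $G$ is not nilpotent.

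It remains to handle the opposite orientation, $p\mid n$ and $q\mid m$. This is the step I expect to be the main obstacle, since the Frobenius kernel $\overline P$ is then an $n$-group and the computation above only places it in $D_{n,m}$, not $D_{m,n}$. My first attempt is to use the freedom in choosing $M$. I would look for a subgroup $M\le G$ (or a section inside a larger subgroup of the original group) whose central quotient is a Frobenius group with kernel a $\pi(m)$-group and complement a $\pi(n)$-group. To find it, I would go back to the original group and pick a Schmidt subgroup whose normal Sylow prime lies in $\pi(m)$. One needs the set of non-nilpotent subgroups to contain one of the right orientation; the structure of minimal non-nilpotent subgroups (pairs $(p,q)$ with $q\mid p^{k}-1$) is the tool to examine for this. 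If no such orientation can be forced, this case has to be checked against small examples such as $S_3$ with $m=2$, $n=3$. There the only candidate quotient is $S_3$ itself, and a transposition times a $3$-cycle is a transposition, of order $2\nmid 3$. So the argument, or the hypothesis, must be strengthened in this case, presumably by also imposing $D_{n,m}(M/Z(M))=1$, which the first case then handles symmetrically.
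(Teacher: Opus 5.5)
\textbf{Verdict: the statement is false as written, and your $S_3$ computation is a complete counterexample, not just an obstacle to your argument.} So no proof can exist. Take $m=2$, $n=3$. Every proper subgroup $M$ of $S_3$ is abelian, so $M/Z(M)=1$. For $M=S_3$ we have $Z(S_3)=1$ and, as you computed, $D_{2,3}(S_3)=1$. Thus the hypothesis holds for every $M\le S_3$, yet $S_3$ is not nilpotent. (More trivially still, $m=1$, $n=|G|$ makes the hypothesis vacuous, since $L_1(X)=\{1\}$.)

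The paper's proof breaks exactly where yours stalls. After invoking Schmidt's theorem it asserts that $\gcd(m,n)=1$ and $n>1$ give $\pi(m)=\{p\}$ and $\pi(n)=\{q\}$. This silently discards the orientation $p\mid n$, $q\mid m$, and both of its closing contradictions use that the Frobenius kernel consists of $m$-elements.

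\textbf{Your first case is correct, and it is a cleaner form of the paper's argument.} You get that $G/Z(G)$ is Frobenius with kernel $P/\Phi(P)$ and complement of order $q$ directly from $Z(G)=\Phi(P)\times Q^{q}$ and the irreducible action. The paper instead goes through a minimal normal subgroup $N$ and, when $|Q|>q$, claims $Z(G)=\Omega_1(Q)$. That claim fails once $|Q|\ge q^{3}$: in $C_7\rtimes C_{27}$ with $C_{27}$ acting through $C_3$, the center has order $9$.

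\textbf{Your suggested repair, also demanding $D_{n,m}(M/Z(M))=1$, is not enough.} The reason is a second gap in the reduction, one you share with the paper. A proper subgroup $H$ whose order is a $\pi(m)$-number receives parameters $(m',1)$. The theorem requires $n>1$, so it says nothing about $H$, and you cannot conclude that every proper subgroup is nilpotent.

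Concretely, let $G=S_3\times C_5$ and $(m,n)=(6,5)$. Every $M\le G$ either contains the central Sylow $5$-subgroup or has order dividing $6$. So $|M/Z(M)|$ divides $6$ and $L_5(M/Z(M))=1$. Hence $D_{6,5}(M/Z(M))$ and $D_{5,6}(M/Z(M))$ are both trivial, while $G$ is not nilpotent.

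A version that does go through is to impose the hypothesis for every coprime factorization $|G|=mn$ with $n>1$:
\begin{enumerate}
\item If $|H|=m'n'$ with $n'>1$, let $n$ be the $\pi(n')$-part of $|G|$ and $m=|G|/n$.
\item Then $D_{m,n}(X)=D_{m',n'}(X)$ for every section $X$ of $H$, so the hypothesis passes to subgroups.
\item Subgroups of prime-power order are nilpotent anyway, so a minimal counterexample is a Schmidt group $P\rtimes Q$.
\item Choosing $m=|P|$, $n=|Q|$ puts you in your first case.
\end{enumerate}
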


\begin{proof}
Let $G$ be a minimal counterexample. Then $G$ satisfies the stated property and is not nilpotent. Thus, by the minimality of $G$, all of its proper subgroups are nilpotent, since, given any $H<G$, every $M \le H$ is a subgroup of $G$ and therefore
$$D_{m,n}\Bigg(\dfrac{M}{Z(M)}\Bigg)=1.$$ 
In particular, if $|H| = m'n'$, where $m'\mid m$ and $n'\mid n$, we have 
$$
D_{m',n'}\left(\dfrac{M}{Z(M)}\right) \subset D_{m,n}\left(\dfrac{M}{Z(M)}\right) = 1. 
$$
So, $H$ satisfies the stated property. Since $|H|<|G|$ and $G$ is a minimal counterexample, it follows that $H$ is nilpotent.

\noindent By Schmidt's Theorem, we have $G=PQ$, where $P \trianglelefteq G$ is the unique Sylow $p$-subgroup of $G$, and $Q$ is a cyclic Sylow $q$-subgroup. Since $\gcd(m,n)=1$ and $n>1$, we have $\pi(m)=p$ and $\pi(n)=q$.

\noindent Let $N \trianglelefteq P \trianglelefteq G$ be a minimal normal subgroup of $G$. We claim that $NQ=G$. For given $x \in N$ and $y \in Q$, we have $[x,y]=x^{-1}y^{-1}xy \in N$ because $N \trianglelefteq NQ$. On the other hand, since $NQ$ is nilpotent, $Q \triangleleft NQ$ (as $Q$ is a Sylow $q$-subgroup), and hence $[x,y]=x^{-1}y^{-1}xy \in Q$. Therefore, $[x,y] \in N \cap Q=1$, which implies that $[x,y]=1$ for all $x \in N$ and $y \in Q$. Thus, $N \le C_N(Q) \le C_G(Q)$. Let $Z=N \cap Z(P) \le N$.

\noindent Note that for $z \in Z$ and $a \in P$, we have $az=za$, since $z \in Z(P)$. Since $a \in P$ was arbitrary, $z \in C_G(P)$, so $Z \subset C_G(P)$. Since $Z \subset N \subset C_G(Q)$, we get $Z \subset C_G(P) \cap C_G(Q)$. Now take $z \in C_G(P) \cap C_G(Q)$ and $g \in G$. Since $G=PQ$, we write $g=xy$ with $x \in P$ and $y \in Q$. Then $zg=zxy=xzy=xyz$, that is, $z \in Z(G)$. Therefore, $Z \subset Z(G)$, and so $N \cap Z(P) \subseteq Z(G)$.

\noindent Since $Z \subset Z(G)$, we have $Z \trianglelefteq G$. However, as $Z \le N$ and $N$ is a minimal normal subgroup, it follows that $Z=N$, which implies that $N \subseteq Z(P)$ and $N \subseteq Z(G)$.

Let $D \le G$ and define $M=DN$, then $N \le Z(M)$. By hypothesis,
$$
1=D_{m,n}\bigg(\dfrac{M}{Z(M)}\bigg)=
D_{m,n}\left( \dfrac{ \tfrac{M}{N} }{ \tfrac{Z(M)}{N} } \right) \supset  D_{m,n}\left( \dfrac{ \tfrac{M}{N} }{ Z\left(\tfrac{M}{N} \right)} \right),
$$
Thus, if $|\tfrac{G}{N}| = m^*n^*$, where $m^*\mid m$ and $n^*\mid n$, we have that $D_{m^*,n^*}\left(\tfrac{M/N}{Z(M/N)}\right)=1$, and then $\tfrac{G}{N}$ satisfies the property. Since $\left|\tfrac{G}{N}\right|<|G|$ and $G$ is a minimal counterexample, it follows that $\tfrac{G}{N}$ is nilpotent. Hence, $G$ would be nilpotent because $N \le Z(G)$, a contradiction. Therefore, $G=NQ$, and the claim is proven.

\noindent Since $N$ is abelian (as it is a minimal normal subgroup of a solvable group) and $\gcd(|N|,|Q|)=1$, then by \cite[Theorem 4.34]{I} it follows that $N=C_{N}(Q) \times [N,Q]$. If $C_{N}(Q) \neq 1$, then $[N,Q]<N$. Thus,
$$[[N,Q]Q] \le |[N,Q]||Q|<|N||Q|=|G|.$$
Hence, $[N,Q]Q<G$, so $[N,Q]Q$ is nilpotent. Let $H=[N,Q]Q$. We have that $H$ is nilpotent, so it can be viewed as the direct product of its Sylow subgroups. Note that $[N,Q] \le N$, so $[N,Q] \cap Q =1$. Therefore, $H=[N,Q] \times Q$. Thus, $[[N,Q],Q]=1$, which implies that $[N,Q] \le C_{H}(Q)$. Since $[N,Q] \le N$, we have $[N,Q] \le C_{H}(Q) \cap N \le C_{N}(Q)$. Thus, $N=C_{N}(Q)$. Hence, $G=NQ$ is abelian. In fact, given $x$, $\tilde{x} \in N$ and $y$, $\tilde{y} \in Q$, we have
$$xy\tilde{x}\tilde{y}=x\tilde{x}\tilde{y}y=\tilde{x}x\tilde{y}y=\tilde{x}\tilde{y}xy,$$
that is, $G$ is nilpotent, which is a contradiction. Therefore, $C_{N}(Q)=1$.

\noindent If $|Q|>q$, since $Q$ is a cyclic $q$-group, its generator has order greater than or equal to $q^2$. Thus,
$$\Omega_1(Q)=\langle x \in Q : x^q=1 \rangle<Q \implies N\Omega_1(Q)<G.$$
Therefore, $N\Omega_1(Q)$ is nilpotent. Since $1=N \cap \Omega_1(Q)$, then $N$ and $\Omega_1(Q)$ are, respectively, the $p$-Sylow and $q$-Sylow subgroups of $K=N\Omega_1(Q)$. Hence, $K=N \times \Omega_1(Q)$ is abelian and, in particular, $\Omega_1(Q) \le Z(G).$

\noindent On the other hand, $Z(G) \le \Omega_1(Q)$. In fact,
$$Z(G) \le Z(N\Omega_1(Q))=N\Omega_1(Q).$$
Let $1 \neq z \in Z(G)$. Then there exist $x \in N$ and $y \in \Omega_1(Q)$ such that $z=xy$. Note that $y \neq 1$, because if $y=1$, then $z=x \in N \cap Z(G)$. However, $N \cap Z(G) \trianglelefteq G$, so $N \cap Z(G)=1$ or $N$, since $N$ is minimal normal. If $N \cap Z(G)=N$, then $N \trianglelefteq Z(G)$. In particular, $C_N(Q) \neq 1$, a contradiction. Hence, $N \cap Z(G)=1$ and $z=1$, which is a contradiction. Since $y \in \Omega_1(Q)$, we have
$$1=y^q=(x^{-1}z)^q=x^{-q}z^q.$$
Hence, $x^{-q}=(z^q)^{-1} \in Z(G)$, but $x^{-q} \in N$, that is, $x^{-q} \in N \cap Z(G)=1$. So, $x^q=1$. Since $o(x) \mid |N|$ and $o(x) \mid q$, and $\gcd(q, |N|)=1$, we have $x=1$. Hence, $z=y \in \Omega_1(Q)$. Thus, $Z(G) = \Omega_1(Q)$. Notice that $\tfrac{G}{Z(G)} = \tfrac{N\Omega_1(Q)}{\Omega_1(Q)} \rtimes \tfrac{Q}{\Omega_1(Q)}$ is a Frobenius group with kernel $\tfrac{N\Omega_1(Q)}{\Omega_1(Q)}$. Since $|Q|=n$ and $|N| \mid m$, it follows by Lemma \ref{2.7} item (b) that $D_{m,n}\left(\tfrac{G}{Z(G)}\right) =\tfrac{N\Omega_1(Q)}{\Omega_1(Q)}$. But this achieves a contradiction, because $\tfrac{N\Omega_1(Q)}{\Omega_1(Q)} \neq 1$. 

Therefore, $|Q|=q$, which guarantees that $G=N \rtimes Q$ is a Frobenius group with kernel $N$. By the same argument used before, $N\leq D_{m,n}(G)$. Since $Z(G) = 1$, we obtain a contradiction. 

Then, there is no counterexample, and the result follows. 
\end{proof}

If $G$ is a finite nilpotent group and $x,y \in G$ are elements of coprime order, then $o(xy) = o(x)o(y)$. By this fact, it follows that 

\begin{lem}\label{2.16}
    Let $G$ be a finite nilpotent group, with order $mn$ such that $\gcd(m,n)=1$. Then $D_{m,n}(G) = 1$.
\end{lem}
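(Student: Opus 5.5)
We argue by contradiction, using the fact recalled just before the statement: in a finite nilpotent group, elements of coprime orders satisfy $o(xy)=o(x)o(y)$. If $n=1$, or more generally $\gcd(\exp(G),n)=1$, then $D_{m,n}(G)=1$ by Definition \ref{def2}(iv), so we may assume $n>1$. Then $|G|=mn$ gives that $L_n(G)$ contains a nontrivial element, by Cauchy's theorem applied to a prime $q\mid n$ (we may take $n$ with some prime divisor dividing $|G|$; otherwise we are in the trivial case).

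Suppose $1\neq x\in D_{m,n}(G)$. Since $x\in L_m(G)$, $o(x)\mid m$ and $o(x)>1$. Pick any $y\in L_n(G)\setminus\{1\}$; $o(y)\mid n$ and $o(y)>1$. Because $\gcd(m,n)=1$, the orders $o(x)$ and $o(y)$ are coprime. By the definition of $D_{m,n}(G)$, $o(xy)\mid n$.

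On the other hand, $G$ nilpotent means $G$ is the direct product of its Sylow subgroups. Elements of coprime order lie in complementary sets of Sylow factors and hence commute. Therefore $o(xy)=\operatorname{lcm}(o(x),o(y))=o(x)o(y)$. Thus $o(x)\mid o(xy)\mid n$, and combined with $o(x)\mid m$ and $\gcd(m,n)=1$ this forces $o(x)=1$, a contradiction. Hence $D_{m,n}(G)=1$.

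There is no real obstacle. The only points needing care are the degenerate case where $L_n(G)=\{1\}$, handled by Definition \ref{def2}(iv), and the standard justification that coprime-order elements commute in a nilpotent group.
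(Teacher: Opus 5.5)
Your proof is correct and follows the same route as the paper. The paper likewise just invokes the fact that $o(xy)=o(x)o(y)$ for elements of coprime order in a nilpotent group, so any $1\neq x\in L_m(G)$ fails $o(xy)\mid n$ against a nontrivial $y\in L_n(G)$; your handling of $n=1$ via Definition~\ref{def2}(iv) and your use of Cauchy's theorem to produce $y$ only make explicit details the paper leaves implicit.
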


Combining both the results, we obtain that
\begin{cor}
     Let $G$ be a finite group of order $mn$, where $\gcd(m,n)=1$. Then, for any subgroup $M$ of $G$, we have
\[
D_{m,n}\!\bigl(M / Z(M)\bigr) = 1,
\]
if and only if $G$ is a nilpotent group.

\end{cor}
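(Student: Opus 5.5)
The corollary follows from Theorem \ref{Min1} and Lemma \ref{2.16}, one implication each; the only real work is checking that the integers $m,n$ are compatible with each subgroup and quotient involved.

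For the direction ($\Longleftarrow$), assume $G$ is nilpotent and let $M\le G$. Then $M/Z(M)$ is a quotient of a subgroup of a nilpotent group, so it is nilpotent. Its order divides $mn$, so we may write $|M/Z(M)|=m'n'$ with $m'\mid m$, $n'\mid n$ and $\gcd(m',n')=1$. I would check directly from Definition \ref{def2} that $D_{m,n}(M/Z(M))=1$; this only uses element orders in $M/Z(M)$, not the exact factorization of its order. If $\gcd(\exp(M/Z(M)),n)=1$, this holds by definition. Otherwise, take $x\in D_{m,n}(M/Z(M))$ and a nontrivial $n$-element $u$. In a nilpotent group, elements of coprime order satisfy $o(xu)=o(x)o(u)$. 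Since $o(xu)\mid n$, this forces $o(x)\mid n$, and $o(x)\mid m$ as well, so $x=1$. This is exactly the argument behind Lemma \ref{2.16}, applied to $M/Z(M)$ with the ambient parameters $m,n$.

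For the direction ($\Longrightarrow$), assume $D_{m,n}(M/Z(M))=1$ for every $M\le G$. If $n>1$, Theorem \ref{Min1} applies verbatim and gives that $G$ is nilpotent. The remaining case is $n=1$. Here every element is an $m$-element, and $L_n$ of any group is $\{1\}$. So the condition is vacuous: $D_{m,1}=1$ by clause (iv) of Definition \ref{def2}, and it gives no information.

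The main obstacle is therefore the degenerate case $n=1$ (symmetrically, one may swap the roles of $m$ and $n$ when only $m=1$ is trivial). As literally stated, the corollary fails for $n=1$: $S_3$ with $m=6$, $n=1$ is a counterexample. So I would read the corollary under the standing hypothesis of Theorem \ref{Min1} that $n>1$, and state that explicitly. With that convention, both implications are immediate from the two cited results.
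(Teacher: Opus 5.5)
Your reduction is the paper's own: its proof of this corollary is just the combination of Theorem~\ref{Min1} and Lemma~\ref{2.16}. The ($\Longleftarrow$) half is fine, including your check that the argument of Lemma~\ref{2.16} works on $M/Z(M)$ with the ambient $m,n$. Your counterexample $S_3$ with $(m,n)=(6,1)$ is also correct.

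The gap is your diagnosis that $n=1$ is the only obstruction. For $n>1$ you invoke Theorem~\ref{Min1}, but that theorem is false as stated, so the forward implication remains unproved, and in fact fails.

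Take $G=S_3$, $m=2$, $n=3$. A transposition times a $3$-cycle is a transposition, of order $2\nmid 3$, so $D_{2,3}(S_3)=1$. Since $Z(S_3)=1$ and every proper subgroup of $S_3$ is abelian, $D_{2,3}(M/Z(M))=1$ for every $M\le S_3$, yet $S_3$ is not nilpotent. With $(m,n)=(3,2)$ the hypothesis does fail, since $D_{3,2}(S_3)=A_3$. So the condition is not symmetric in $m$ and $n$, and your parenthetical about swapping their roles does not help.

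The case $m=1<n$, which your argument also hands to Theorem~\ref{Min1}, is worse still. Here $L_1(X)=\{1\}$ forces $D_{1,n}(X)=1$ for every $X$, so every non-nilpotent group is a counterexample. Finally, $S_3\times C_5$ with $(m,n)=(6,5)$ is a counterexample with $m,n>1$ in which the non-nilpotency lies inside the $\pi(m)$-part. Every $M/Z(M)$ is a $\{2,3\}$-group, so $D_{6,5}(M/Z(M))=1$ by clause (iv) of Definition~\ref{def2}.

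Concretely, the proof of Theorem~\ref{Min1} breaks in two places. First, the minimal-counterexample step applies the theorem to proper subgroups $H$ with $|H|=m'n'$ even when $n'=1$, which is outside its hypotheses (e.g.\ $S_3\le S_3\times C_5$). Second, the line ``since $\gcd(m,n)=1$ and $n>1$, we have $\pi(m)=p$ and $\pi(n)=q$'' assumes without justification that the normal Sylow subgroup $P$ of the Schmidt group is the $\pi(m)$-part. For $S_3$ with $m=2$ it is the $\pi(n)$-part.

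With $(m,n)$ fixed in advance nothing links $m$ to $P$, so no repair of the form ``assume $n>1$'' can work; the hypothesis itself must be strengthened. For instance, you could require the condition for every coprime factorization $|G|=mn$. This passes to subgroups, since a coprime factorization of $|H|$ extends to one of $|G|$ without changing $D$ on sections of $H$. A minimal counterexample is then a Schmidt group $G=P\rtimes Q$ with $Q$ a cyclic $q$-group. There $C_P(Q)\le\Phi(P)\le Z(G)$, so $G/Z(G)$ is a Frobenius group with kernel $PZ(G)/Z(G)\neq 1$ and complement of order $q$. Taking $m=|P|$, Lemma~\ref{2.7}(b) gives $D_{m,n}(G/Z(G))\neq 1$, a contradiction.
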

\begin{thm}\label{Min2}
    Let $G$ be a finite group of order $mn$, where $\gcd(m,n)=1$ and $n>1$, such that $D_{m,n}(G)\neq 1$. If for every proper non-nilpotent subgroup $M$ of $G$ we have
\[
D_{m,n}\!\bigl(M / Z(M)\bigr) = 1,
\]
then $G$ is a Frobenius group with kernel $D_{m,n}(G)$.
\end{thm}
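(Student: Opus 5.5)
Let $N=D_{m,n}(G)\neq 1$. By Theorem \ref{nil}, for every $x\in L_n(G)\setminus\{1\}$ the group $N\langle x\rangle$ is Frobenius with kernel $N$, and by Corollary \ref{nil2} $N$ is a characteristic nilpotent subgroup. The plan is to show that $N$ is a Hall $\pi(m)$-subgroup of $G$. The second part of Theorem \ref{nil} then yields a Hall $\pi(n)$-subgroup $H$ with $G=N\rtimes H$ Frobenius.

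We argue by contradiction and suppose $N$ is not a Hall $\pi(m)$-subgroup. Then some prime $p\mid m$ has $N_p\ne P$, where $P\in\mathrm{Syl}_p(G)$ and $N_p$ is the Sylow $p$-subgroup of $N$. Pick $q\mid n$ and $Q\in\mathrm{Syl}_q(G)$. By Proposition \ref{factor}(ii), $Q$ is cyclic or generalized quaternion, and $Q$ acts fixed-point-freely on $N$.

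We would study subgroups of the form $M=N\langle y\rangle$ or $M=N P Q'$, where $y$ is an $m$-element outside $N$. The key observation: choose $y\in L_m(G)\setminus N$ and $x\in L_n(G)\setminus\{1\}$ with $o(yx)\nmid n$; such a pair exists because $y\notin N$ (after replacing $y$ by a suitable element, as in the proof of Proposition \ref{factor}(i)). Set $M=\langle N,y,x\rangle$.

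\textbf{Case $M$ proper and non-nilpotent.} The hypothesis gives $D_{m',n'}(M/Z(M))=1$. We show $Z(M)\cap N$ is trivial. Indeed, any $z\in Z(M)\cap N$ centralizes $x$, and $N\langle x\rangle$ is Frobenius, so $z=1$. Moreover $Z(M)$ is an $m$-group: an $n$-part of $Z(M)$ would centralize $N\ne1$, again contradicting the Frobenius property. Hence $NZ(M)/Z(M)\cong N$ is nontrivial. Every nontrivial $n$-element of $M/Z(M)$ lifts (by coprimality) to a nontrivial $n$-element of $M$. The plan is to check, using Lemma \ref{2.7}(b) inside the Frobenius groups $N\langle x\rangle$, that $NZ(M)/Z(M)\le D_{m',n'}(M/Z(M))$, which contradicts the hypothesis. \textbf{This is the main obstacle}: the image of $N$ must still satisfy the order condition against all $n$-elements of $M/Z(M)$. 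We would handle it by showing that $Z(M)$ is an $m$-group commuting with $n$-elements, so orders of products $ax$ modulo $Z(M)$ keep their $\pi(n)$-part, and $o(ax)\mid n$ descends.

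\textbf{Case $M$ nilpotent.} Then $N\langle x\rangle$ is nilpotent, contradicting that it is Frobenius.

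\textbf{Case $M=G$.} Then $G$ is generated by $N$ together with one $m$-element and one $n$-element. We would apply the hypothesis to the proper subgroups $N L$, where $L$ ranges over the Sylow subgroups outside $N$, to force $P\le N$ for each $p\mid m$. This again uses the same quotient-by-center argument, now applied to $M=NP\langle x\rangle$ if it is proper. If every such subgroup equals $G$, then $G=NP\langle x\rangle$ and a direct argument is needed. In that situation we would use that $G/N$ satisfies $D_{m,n}(G/N)=1$ by Proposition \ref{factor}(i), together with Schmidt's theorem on a minimal non-nilpotent subgroup containing $y$ and $x$, to produce a proper non-nilpotent subgroup on which the argument above applies.

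Concluding, $N$ contains every Sylow $p$-subgroup for $p\mid m$, so $N$ is the Hall $\pi(m)$-subgroup, and Theorem \ref{nil} gives that $G$ is Frobenius with kernel $D_{m,n}(G)$.
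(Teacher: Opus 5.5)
There is a genuine gap. Your Cases 1 and 2 are sound. The step you flag as the main obstacle is immediate: $N\le D_{m,n}(M)$ by Lemma \ref{Lemma2.1}(iv), and Lemma \ref{Lemma2.1}(ii) maps $D_{m,n}(M)$ into $D_{m,n}(M/Z(M))$.

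The gap is the case $M=\langle N,y,x\rangle=G$. This is exactly where the Hall property of $N$ would have to be proved, and you give only a plan that does not work as described:
\begin{itemize}
\item The subgroups $NP$ with $p\mid m$ are $\pi(m)$-groups. By Definition \ref{def2}(iv) the hypothesis says nothing about them.
\item If $NP\langle x\rangle=G$, a minimal non-nilpotent subgroup containing $y$ and $x$ need not contain $N$. Your Case 1 contradiction needs both $N\le M$ (to get $N\le D_{m,n}(M)$) and a nontrivial $n$-element in $M$ (to get $N\cap Z(M)=1$).
\end{itemize}
So nothing in the proposal shows $P\le N$, and the conclusion is not reached.

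The missing observation is that your Case 1 argument never uses $y$. It shows that no proper subgroup of $G$ can contain $N\langle x\rangle$ with $x\in L_n(G)\setminus\{1\}$. Apply it to $M=N\langle x\rangle$ itself:
\begin{itemize}
\item By Theorem \ref{nil}, $M$ is a Frobenius group with kernel $N$, so it is non-nilpotent and $Z(M)=1$.
\item By Lemma \ref{Lemma2.1}(iv), $N\le D_{m,n}(M)$.
\item Hence if $M<G$, the hypothesis is contradicted, and so $G=N\langle x\rangle$.
\end{itemize}
Theorem \ref{nil} already identifies $N\langle x\rangle$ as a Frobenius group with kernel $N=D_{m,n}(G)$. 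A posteriori, $N$ is the Hall $\pi(m)$-subgroup and $o(x)=n$.

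This makes your Sylow-by-Sylow analysis unnecessary. It is also much shorter than the paper's route, which first uses Theorem \ref{Min1} and Lemma \ref{2.16} to show $G$ is minimal non-nilpotent. The paper then uses Schmidt's theorem to identify $D_{m,n}(G)$ as a minimal normal subgroup complemented by a cyclic Sylow subgroup, before invoking Theorem \ref{nil}.
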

\begin{proof}
    By Theorem \ref{Min1} and Lemma \ref{2.16}, all subgroups of $G$ are nilpotent. Therefore $G$ is a minimal non-nilpotent group, and hence $G$ is a solvable group by Schmidt's Theorem.

Since $D_{m,n}(G)\trianglelefteq G$ is solvable, there exists a $p$-group elementary abelian $N\leq D_{m,n}(G)$ which is a minimal normal subgroup of $G$.

Let $M$ be a maximal subgroup of $G$. We have $N\cap M=1$. Indeed, let $K:=N\cap M\triangleleft M$ and suppose $K\neq 1$. Since $N$ is minimal normal in $G$, $KG=N$. By maximality of $M$, either $NM=G$ or $N\subseteq M$. If $NM=G$, then $N=KG=KN=K$, because $N$ is abelian. Consequently $N\leq M$, which is a contradiction. Thus $NM\neq G$ and hence $N\leq M$. Let $x\in N$. Since $N\leq D_{m,n}(G)$ we have $o(x)\mid m$ and $o(xy)\mid n$ for any $y\in L_n(M)\setminus\{1\}$. Thus $x\in D_{m,n}(M)$ and we have proved $N\leq D_{m,n}(M)=1$, a contradiction.

Therefore $N\cap M=1$, which implies $G=N\rtimes M$. By the same argument, we may show that $D_{m,n}(G)\cap M=1$. We claim that $D_{m,n}(G)=N$. In fact, if $h\in D_{m,n}(G)\setminus N$, we can write $h=xy$, with $x\in N$ and $1\neq y\in M$. Thus $y=x^{-1}h\in D_{m,n}(G)$, which is absurd. So $D_{m,n}(G)=N$ and $D_{m,n}(G)$ is elementary abelian of prime-power order.

Let $K\leq M$ be a Hall $p'$-subgroup of $M$. By the same argument, we can show that $D_{m,n}(G)\cap R=1$ for any maximal subgroup $R$ of $G$. Then $D_{m,n}(G)$ is a maximal subgroup of $G$ and we have $G=D_{m,n}(G)K$, and so $M=K$ and $M$ is a Hall $p'$-subgroup of $G$.

Since $G$ is a minimal non-nilpotent group, again by Schmidt's Theorem we have $G=D_{m,n}(G)Q$ where $Q\in\mathrm{Syl}q(G)$ is cyclic. By the aforementioned considerations and $\gcd(m,n)=1$, we have $m=|D{m,n}(G)|=p^t$ and $n=|Q|=q^s$ for some $s\in\mathbb{N}$. Once $Q$ is cyclic, we can write $G=D_{m,n}(G)\langle y\rangle$ for some $y\in L_n(G)\setminus\{1\}$. By Theorem \ref{nil}, $G$ is a Frobenius group with kernel $D_{m,n}(G)$.
\end{proof}

\section{$E$-series}

\begin{defn}
Let $G$ be a finite group and let $m,n\in\mathbb{N}$ with $\gcd(m,n)=1$ be fixed parameters. We define an \emph{$E$-series} of $G$ as the sequence
$$
1 =: E_{0}(G) \leq E_{1}(G) \leq E_{2}(G) \leq \cdots
$$
recursively defined as follows: for $i \geq 0$, we set
    $$
    E_{2i+1}(G)/E_{2i}(G) \;:=\;
    \begin{cases}
       G/E_{2i}(G), & \text{if $G/E_{2i}(G)$ is nilpotent}, \\[6pt]
       D_{m,n}\!\big(G/E_{2i}(G)\big), & \text{otherwise}.
    \end{cases}
    $$
and
    $$
    E_{2i+2}(G)/E_{2i+1}(G) \;:=\;
    \begin{cases}
       G/E_{2i+1}(G), & \text{if $G/E_{2i+1}(G)$ is nilpotent}, \\[6pt]
       D_{n,m}\!\big(G/E_{2i+1}(G)\big), & \text{otherwise}.
    \end{cases}
    $$

We say that $G$ is $E$-nilpotent if there exists an $E$-series that reaches $G$, and when this occurs at the step where $E_k(G)=G$, we say that the series has length $k+1$.
\end{defn}

This recursive construction highlights the role of the operators $D_{m,n}(\cdot)$ and $D_{n,m}(\cdot)$ in isolating successive nilpotent layers of the group. Each step extracts the largest characteristic subgroup whose quotient has controlled coprime-product behavior, thereby providing a structured way to measure how far $G$ is from being nilpotent.

\begin{rem} The alternation between $D_{m,n}(.)$ and $D_{n,m}(.)$ is essential. By Proposition \ref{factor}(i), if we applied only $D_{m,n}(.)$ repeatedly, the sequence would result in the trivial series $1 \le 1 \le 1,\dots$. By alternating the two operators, we avoid this problem and obtain a series with nontrivial nilpotent quotients.

\end{rem}
The following examples illustrate how the alternation between $D_{m,n}(\cdot)$ and $D_{n,m}(\cdot)$ operates in concrete situations. In particular, they show how the $E$-series progressively enlarges characteristic nilpotent quotients until reaching $G$, and how the parameters $m,n$ influence the shape of the series.

\begin{exam}
Consider $G = S_3$, $m = 3$ and $n = 2$. We have that
\begin{itemize}
    \item $E_{0}(S_3) = 1$;
    \item $E_{1}(S_3) = D_{3,2}(S_3) = A_3$;
    \item $\dfrac{E_{2}(S_3)}{A_3} = D_{2,3}\!\left(\dfrac{S_3}{A_3}\right)=D_{2,3}(C_2)$. Since  $C_2$ is nilpotent, $$\dfrac{E_{2}(S_3)}{A_3} = C_2 \implies E_2=G.$$
\end{itemize}
Hence, the $E$-series of $S_3$ is
$$
1 = E_{0}(S_3)\;\leq\; E_{1}(S_3)=A_3 \;\leq\; E_{2}(S_3) = G.
$$
\end{exam}

In the case of $S_3$, the $E$-series reaches the whole group in exactly two non-trivial steps. This already anticipates the structural interpretation developed later: groups whose $E$-series has length three are precisely Frobenius groups (see Theorem \ref{r}).

\begin{exam}
Consider $G = S_4$, $m = 8$ and $n = 3$. We have that
\begin{itemize}
    \item $E_{0}(S_4) = 1$;
    \item $E_{1}(S_4) = D_{8,3}(S_4) = C_2 \times C_2$;
    \item $\dfrac{E_{2}(S_4)}{C_2 \times C_2} = D_{3,8}\!\left(\dfrac{S_4}{C_2 \times C_2}\right) = D_{3,8}(S_3) = C_3 \implies E_{2}(S_4) = A_4$;
    \item  $\dfrac{E_{3}(S_4)}{A_4} = D_{8,3}\!\left(\dfrac{S_4}{A_4}\right) = D_{8,3}(C_2)$. Since $C_2$ is nilpotent, $$\dfrac{E_{3}(S_4)}{A_4} = C_2 \implies E_3=G.$$
\end{itemize}
Hence, the $E$-series of $S_4$ is
$$
1 = E_{0}(S_4)\;\leq\; E_{1}(S_4)=C_2\times C_2 \;\leq\; E_{2}(S_4) = A_4\; \leq \; E_{3}(S_4) = G.
$$
\end{exam}

The computation of the $E$-series of $S_4$ exhibits a nontrivial interplay between the two operators $D_{m,n}(\cdot)$ and $D_{n,m}(\cdot)$. In this example, the third term coincides with the whole group, and Theorem 3.8 explains why in general the sequence stabilizes once two consecutive Frobenius-type layers appear.

The next result shows that the first nontrivial step of the $E$-series already detects nilpotency: the series collapses at $E_1(G) = G$ iff $G$ is nilpotent. This result confirms that the operators $D_{m,n}(\cdot)$ and $D_{n,m}(\cdot)$ provide a genuine refinement of the Fitting radical.

\begin{thm}
Let $G$ be a finite group of order $mn$ with $m,n > 1$ and $\gcd(m,n) = 1$. Then $G$ possesses an $E$-series of length $2$,
$$ 1= E_{0}(G) \le E_{1}(G)=G,$$
if and only if $G$ is nilpotent.
\end{thm}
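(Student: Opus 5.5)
The plan is to argue directly from the definition of the $E$-series, since both directions reduce to understanding the first step $E_1(G)$.

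For the direction ``$G$ nilpotent $\Rightarrow$ length $2$'': we have $E_0(G)=1$, and $G/E_0(G)\cong G$ is nilpotent. The first case of the recursive definition therefore applies, giving $E_1(G)/E_0(G)=G/E_0(G)$, that is, $E_1(G)=G$. The series is $1=E_0(G)\le E_1(G)=G$, which has length $2$ by the paper's convention.

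For the converse, assume $E_1(G)=G$ and suppose, for contradiction, that $G$ is not nilpotent. Then the second case of the definition is the one in force, so $E_1(G)=D_{m,n}(G)$, and hence $D_{m,n}(G)=G$. Since $n>1$ and $\gcd(m,n)=1$, Remark \ref{proper} says $D_{m,n}(G)$ is always a proper subgroup of $G$. This is a contradiction, so $G$ is nilpotent.

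The underlying reason is visible in that remark. If $D_{m,n}(G)=G$, then $G\subseteq L_m(G)$, so every $n$-element is trivial. But Cauchy's theorem produces a nontrivial element of order $p$ for any prime $p\mid n$, which is impossible.

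The only point needing care, and the closest thing to an obstacle, is the hypothesis $n>1$. It is needed both to invoke Remark \ref{proper} and to fall under clause (iv) of Definition \ref{def2} rather than the degenerate convention $D_{m,n}(G)=1$. The theorem assumes $m,n>1$, so this is available. Even in the degenerate case one would get $D_{m,n}(G)=1\neq G$ when $G\neq 1$, so the argument is robust. No deeper structural results, such as Theorem \ref{nil}, should be required.
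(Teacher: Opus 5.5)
Your proof is correct and matches the paper's argument: the converse is the same case split on the definition of $E_1(G)$, with the non-nilpotent case ruled out by Remark~\ref{proper}, and the forward direction reads $E_1(G)=G$ directly off the first clause of the definition. The paper additionally notes that $D_{m,n}(G)=1$ for nilpotent $G$ in that direction, which, as your version shows, is not actually needed.
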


\begin{proof}
($\Rightarrow$) Suppose that the series has length $2$. Then there are two possibilities:
\begin{itemize}
    \item[(i)] If $G$ is nilpotent, then by definition $E_{1}(G)=G$;
    \item[(ii)] If $G$ is not nilpotent, then $E_{1}(G)=D_{m,n}(G)$.
\end{itemize}
However, by \ref{proper}, we know that $D_{m,n}(G)$ is never equal to $G$. 
Therefore, the only possible situation is when $G$ is nilpotent.

\noindent($\Leftarrow$) Conversely, assume that $G$ is nilpotent. 
Since $E_{0}(G)=1$ and, by the nilpotency of $G$, we have $D_{m,n}(G)=1$, 
it follows from the definition that $E_{1}(G)=G$. 
Hence, the series reduces to
$$
1 = E_{0}(G) \le E_{1}(G) = G,
$$
which shows that its length is $2$.
\end{proof}

\begin{rem}
Observe that if $G$ is a finite group and $\{E_i(G)\}_{i \ge 0}$ is an $E$-series of $G$, then the following holds:  if there exists an integer $t \ge 1$ such that $E_t(G) = G$, it follows that $G$ is solvable.
\end{rem}

%\begin{proof}By definition of the $E^{(m)}$-series, each quotient 
%$E^{(m)}_{i+1}(G) / E^{(m)}_i(G)$ is nilpotent. We prove by induction on $i$ that all terms of the series are solvable. Indeed, $E^{(m)}_0(G) = 1$ is trivially solvable. Assume that $E^{(m)}_i(G)$ is solvable for some $i \ge 0$. 
%Since the quotient $E^{(m)}_{i+1}(G) / E^{(m)}_i(G)$ is nilpotent (and therefore solvable), it follows from the fact that the class of solvable groups is closed under extensions that $E^{(m)}_{i+1}(G)$ is also solvable. By induction, all $E^{(m)}_i(G)$ are solvable, and in particular $E^{(m)}_t(G) = G$.
%\end{proof}

\begin{rem}
If $G$ is a non-abelian simple group, then $D_{m,n}(G) = D_{n,m}(G) = 1$, 
since the subgroups $D_{m,n}(G)$ and $D_{n,m}(G)$ are characteristic. Hence, all the terms of the $E$-series of $G$ are trivial.
\end{rem}

The stabilization phenomenon described in the next result can be observed in both examples: once the quotient $\tfrac{G}{E_1(G)}$ and the subgroup $E_2(G)$ each carry a Frobenius structure, no further refinement is possible. This structural rigidity underlies the classification of short $E$-series given in Theorem \ref{r}.

 \begin{thm}\label{can}
Let $G$ be a finite group of order $mn$ where $\gcd(m,n)=1$ and $m,n>1$.  
Assume that 
\[
D_{n,m}\!\left(\frac{G}{D_{m,n}(G)}\right)=\frac{E_2}{D_{m,n}(G)},\qquad
D_{m,n}\!\left(\frac{G}{E_2}\right)=\frac{E_3}{E_2}\qquad\text{and}\qquad
D_{n,m}\!\left(\frac{G}{E_3}\right)=\frac{E_4}{E_3}.
\]
Then $E_3 = E_4$.
\end{thm}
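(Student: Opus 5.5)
The plan is to argue by contradiction. Write $A=D_{m,n}(G)$, $U=E_2$, $V=E_3$, and suppose $W/V:=D_{n,m}(G/V)\neq 1$. The aim is to show that the three layers $U/A$, $V/U$, $W/V$, together with $A$, would form a chain of Frobenius actions that a finite group cannot carry. Since $W\neq V$ forces $G/V$ to be non-nilpotent, every earlier quotient in the chain is non-nilpotent as well. So the $D$-operators are really applied at each step, and we may assume $A\neq 1$, $U\neq A$ and $V\neq U$; if one of these fails, the statement should collapse to one of the earlier cases, which I would treat separately. By Theorem \ref{frob}(b), $U=A\rtimes H$ is a Frobenius group with kernel $A$ and complement $H\cong U/A$, and $V/A$ is a Frobenius group with kernel $U/A$ and a complement isomorphic to $V/U$. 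By Corollary \ref{nil2}, $U/A$ is a nilpotent $\pi(n)$-group and $V/U$ is a nilpotent $\pi(m)$-group.

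\textbf{Step 1: $W/U$ is Frobenius with kernel $V/U$.} The argument is the same as in the proof of Theorem \ref{frob}. Since $V/U=D_{m,n}(G/U)$, any $1\neq v\in V/U$ satisfies $o(vh)\mid n$ for every nontrivial $n$-element $h$ of $G/U$. If such an $h$ commuted with $v$, then $o(vh)=o(v)o(h)$ would have a prime divisor in $\pi(m)$, a contradiction. Lifting a Schur--Zassenhaus complement of $V/U$ in $W/U$ therefore gives a fixed-point-free action on $V/U$. A consequence is $[W,V]U=V$, because a fixed-point-free coprime action satisfies $[V/U,W/U]=V/U$.

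\textbf{Step 2: make $\operatorname{Aut}(U/A)$ abelian.} The group $U/A$ is simultaneously a Frobenius complement (in $U$) and a nilpotent Frobenius kernel admitting the fixed-point-free group $V/U$ (in $V/A$). Its Sylow subgroups are therefore cyclic or generalized quaternion, as in Proposition \ref{factor}(ii). If they are all cyclic, then $U/A$ is cyclic and $\operatorname{Aut}(U/A)$ is abelian. In that case the conjugation action of $V$ on $U/A$ factors through an abelian group, so $[W,V]$ acts trivially on $U/A$. By Step 1, $V=[W,V]U$, and $U/A$ is abelian because it is cyclic, so $V$ centralizes $U/A$. This contradicts $V/A$ being Frobenius with kernel $U/A$ and $V\neq U$. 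Hence $W=V$, that is, $E_3=E_4$.

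\textbf{Main obstacle.} The hard case is a generalized quaternion Sylow $2$-subgroup of $U/A$, where $2\mid n$. For $Q_8$, an automorphism of order $3$ acts fixed-point-freely and $\operatorname{Aut}(Q_8)\cong S_4$ is non-abelian, so the argument of Step 2 breaks. My first attempt would be a finer version of the same idea. A nilpotent group admitting a fixed-point-free automorphism group is either abelian or forces $V/U$ to be a $3$-group acting on $Q_8$. In the abelian case I would rerun Step 2 inside the abelian group $\operatorname{Aut}$ of each Sylow factor. In the $Q_8$ case I would exploit that $W/V$, an $n$-group, must act fixed-point-freely on the cyclic $3$-group $V/U$ while normalizing the $Q_8$-layer. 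This forces $W/U$ to embed into a solvable subgroup of $\operatorname{Aut}(Q_8)\cong S_4$ with a Frobenius structure over $C_3$, which leaves only $S_3$ with complement $C_2$. I would then try to show that the resulting $C_2$ centralizes a nontrivial element of some layer, giving a contradiction with Step 1 or with the definition of $U/A$ as $D_{n,m}(G/A)$.
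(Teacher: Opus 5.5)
\textbf{Gap 1: the generalized quaternion case.} You leave this case open, and your plan for it rests on a false premise. An automorphism of order $3$ of $Q_8$ is \emph{not} fixed-point-free: it permutes $\pm i,\pm j,\pm k$ but fixes $-1$.

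The missing idea is one line. Suppose the nilpotent group $U/A\cong C_d\times S$ had a generalized quaternion Sylow $2$-subgroup $S$. The unique involution $z$ of $S$ generates a characteristic subgroup of the normal subgroup $U/A$ of $G/A$, so $z\in Z(G/A)$. Then every nontrivial element of $V/U$ fixes $z\neq 1$, contradicting that $V/A$ is Frobenius with kernel $U/A$. Equivalently, as in the paper, $z$ commutes with a nontrivial $m$-element $u$ of $G/A$, so $o(zu)\nmid m$ and $z\notin D_{n,m}(G/A)=U/A$.

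So this case cannot occur, and $U/A$ is cyclic. Your whole ``main obstacle'' paragraph (embedding $W/U$ into $S_4$, looking for a centralized element) should be replaced by this observation. As written, your proof covers only cyclic $U/A$.

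\textbf{Gap 2: the case $A=1$.} The case $A=D_{m,n}(G)=1$ does not ``collapse to an earlier case''. For $G=S_4$ and $(m,n)=(3,8)$ one gets $A=1<U=V_4<V=A_4$. Your Step 2 needs $A\neq 1$ to make $U/A$ a Frobenius complement; the paper's proof tacitly assumes $E_1\neq 1$ as well.

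The repair is to notice that Step 2 never really uses $W$. If $V\neq U$, then $G/U$ has a nontrivial $n$-element $hU$. It acts fixed-point-freely on $V/U$, by exactly your Step 1 argument, so $V=[V,\langle h\rangle]U$ already. Hence your argument proves $V=U$ whenever $A\neq 1$. Applying this with $m$ and $n$ interchanged to the chain $D_{n,m}(G)=U\le V\le W$ settles $A=1$ when $U\neq 1$. If also $U=1$, then $V=W=1$.

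\textbf{Comparison with the paper.} With these two repairs, your ending is a genuine alternative to the paper's. The paper asserts without proof that $\mathrm{Fit}(G/E_1)=E_2/E_1$, deduces $C_{G/E_1}(E_2/E_1)=E_2/E_1$, and embeds $G/E_2$ in the abelian group $\mathrm{Aut}(E_2/E_1)$. You need only two facts: $G$ acts on the cyclic group $U/A$ through an abelian group, and a fixed-point-free coprime action satisfies $[V/U,h]=V/U$. This avoids that Fitting-subgroup identification.
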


\begin{proof}
Let $E_1 = D_{m,n}(G)$. Without loss of generality, we may assume $G = E_4$.

Consider the centralizer 
\[
C_{G/E_1}\!\left(\frac{E_2}{E_1}\right) = \frac{L}{E_1}.
\]
Since $E_2$ is a Frobenius group with kernel $E_1$, we may write $E_2 = E_1 \rtimes H$ for some subgroup $H$.  
Because $E_2$ is a Frobenius group, we have 
\[
H \cong C_d \times S,
\]
where $S$ is either a generalized quaternion group or $S=1$.  
In particular, the center $Z(E_2/E_1)$ is cyclic.
First assume $S\neq 1$.
Let $x\in S$ of order $2$.
Then $xE_1\in Z(\frac{G}{E_1})$.
Therefore $xE_1\not\in D_{n,m}(\frac{G}{E_1}),$ which is a contradiction.

So $S=1$, and so $Z(E_2/E_1)=E_2/E_1$  is a cyclic group.

Since  
\[
\mathrm{Fit}\!\left(\frac{G}{E_1}\right)=\frac{E_2}{E_1},
\]
we obtain
\[
\frac{L}{E_1}
= C_{G/E_1}\!\left(\frac{E_2}{E_1}\right)
=  \frac{E_2}{E_1}.
\]

Now,
\[
\frac{G/E_1}{L/E_1} \cong G/L.
\]
By the $NC$-Theorem, there exists a monomorphism
\[
\theta:\;
\frac{G/E_1}{C_{G/E_1}\!\left( E_2/E_1\right)}
\longrightarrow
\mathrm{Aut}\!\left(\frac{E_2}{E_1}\right).
\]

Since $E_2/E_1$ is cyclic, its automorphism group is abelian.  
Hence
\[
\frac{G/E_1}{C_{G/E_1}\!\left( E_2/E_1 \right)}
\cong
\frac{G/E_1}{L/E_1}
\cong
\frac{G}{L}
\]
is abelian. Therefore,
\[
D_{m,n}\!\left(\frac{G}{E_2}\right)=1,
\]
and consequently $E_3 = E_4$.
\end{proof}

%$D_{m,n}\!\left(\frac{G}{E_2}\right) = 1.$
%Analogamente, obtemos também $D_{n,m}\!\left(\frac{G}{E_2}\right) = 1.$ Assim, $E_2(G) = E_3(G)$.

\begin{defn} Let $G$ be a finite solvable group. The Fitting radical of $G$, denoted by $F(G)$, is the largest normal nilpotent subgroup of $G$. We define recursively the Fitting series of $G$ by
$$
F_1(G) := F(G), \quad \text{and for } i > 1, \quad 
F_i(G)/F_{i-1}(G) := F\big(G/F_{i-1}(G)\big).
$$

\noindent The smallest integer $h$ such that $F_h(G) = G$ is called the Fitting height of $G$, and it is denoted by $h_F(G)$.

\end{defn}

Since every step of the $E$-series produces a characteristic subgroup with nilpotent quotient, the construction mirrors the behavior of the Fitting series. The following formalizes this connection by showing that the $E$-series imposes a universal upper bound of four on the Fitting height of any group admitting such a series.

\begin{cor}Let $G$ be a finite group admitting an $E$-series
$$1 = E_0(G) \le E_1(G) \le \cdots \le E_t(G) = G.$$
Then the Fitting height of $G$ satisfies $h_F(G) \le 4.$
\end{cor}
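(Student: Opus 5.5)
We argue that every step of an $E$-series has nilpotent factor, and that the series, if it reaches $G$, does so within at most four nontrivial steps. Fitting height is then bounded by the number of nilpotent layers, since a normal series $1=N_0\le N_1\le\cdots\le N_k=G$ of normal subgroups with nilpotent factors forces $h_F(G)\le k$.

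\textbf{Step 1: each factor is nilpotent and each term is normal.} By Lemma \ref{ces}, each $E_{i+1}(G)/E_i(G)$ is characteristic in $G/E_i(G)$. It is therefore normal, and by induction every $E_i(G)$ is normal (indeed characteristic) in $G$. For nilpotency of the factors, either the factor is the whole quotient $G/E_i(G)$, which is nilpotent by definition, or it equals $D_{m,n}(G/E_i(G))$ or $D_{n,m}(G/E_i(G))$. In the latter case it is nilpotent by Corollary \ref{nil2}, applied to the quotient, whose order is still a product of coprime parts dividing $m$ and $n$. In the degenerate case where $n$ (or $m$) does not meet the order of the quotient, the definition gives $D=1$, which is nilpotent.

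\textbf{Step 2: the Fitting series dominates.} Next, by induction on $i$, we show $E_i(G)\le F_i(G)$. Suppose $E_i\le F_i$. Then $E_{i+1}F_i/F_i$ is a homomorphic image of $E_{i+1}/E_i$, hence nilpotent and normal in $G/F_i$. It therefore lies in $F(G/F_i)=F_{i+1}/F_i$. Consequently, if $E_t(G)=G$, then $h_F(G)\le t$.

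\textbf{Step 3: bound $t$ by $4$.} We show that the series reaches $G$ by $E_4$. Once some quotient $G/E_i$ is nilpotent, the series ends at the next step, so it suffices to rule out a fifth nontrivial layer. Suppose $G$ is not reached by $E_3$, so that $E_1,E_2,E_3,E_4$ arise via the $D$-operators with nonnilpotent quotients. Then the hypotheses of Theorem \ref{can} hold with $E_1=D_{m,n}(G)$, and it gives $E_3=E_4$. Meanwhile Proposition \ref{factor}(i) shows the $D$-layers cannot stall nontrivially in a way that later progresses. Hence, if the series stabilizes at a proper subgroup, it never reaches $G$, contradicting $E_t(G)=G$. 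Therefore either $G$ is reached among $E_1,\dots,E_4$, or the quotient becomes nilpotent at that stage and the next term is $G$.

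\textbf{Main obstacle.} The delicate bookkeeping is exactly here: a nilpotent quotient $G/E_3$ gives $E_4=G$, i.e.\ $t=4$, and the required conclusion $h_F\le 4$ is then consistent. The risk is the case where $G/E_4$ would still be nilpotent, giving $t=5$. I would handle it by applying Theorem \ref{can} to show $G/E_2$ is abelian-by-(its Fitting), which refines the count: in that situation $E_2$ already lies in $F_2$ and $G/F_2$ is nilpotent, so $h_F\le 3$. Alternatively, the Step 2 argument can be applied directly with the improved containment $G/L$ abelian where $L=E_2$ in the proof of Theorem \ref{can}.
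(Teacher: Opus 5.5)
Your strategy is the one the paper intends. The factors $E_{i+1}/E_i$ are characteristic and nilpotent, so $E_i\le F_i(G)$; your Step~2 spells out what the paper leaves implicit. Theorem~\ref{can} then caps the length of any series that reaches $G$. The trouble is in Step~3 and your ``main obstacle'': the case split is off by one, and the case you worry about is in fact vacuous. Not reaching $G$ by $E_3$ only says that $G$, $G/E_1$ and $G/E_2$ are non-nilpotent. If $G/E_3$ is nilpotent, then $E_4=G$ and $t=4$, so you are done.

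The right supposition is $E_4\neq G$. Then $G/E_3$ is also non-nilpotent, so all of $E_1,\dots,E_4$ come from the $D$-operators, and Theorem~\ref{can} gives $E_4=E_3$. Hence $G/E_4=G/E_3$ is \emph{not} nilpotent, and the scenario ``$G/E_4$ nilpotent, $E_5=G$'' can never occur. Explicitly, $E_5/E_4=D_{m,n}(G/E_3)=D_{m,n}\bigl((G/E_2)/D_{m,n}(G/E_2)\bigr)=1$ by Proposition~\ref{factor}(i). Next, $E_6/E_5=D_{n,m}(G/E_3)=E_4/E_3=1$. Inductively $E_k=E_3\neq G$ for all $k\ge 3$, contradicting $E_t=G$. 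So $t\le 4$, and Step~2 gives $h_F(G)\le 4$; that closes the proof.

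The patch in your last paragraph should be dropped. It relies on an intermediate assertion from inside the paper's proof of Theorem~\ref{can}, namely that $L=E_2$ and $G/E_2$ is abelian, rather than on the theorem's statement. That assertion does not hold in general under the theorem's hypotheses. For $G=Q_8\times C_3$ with $m=8$, $n=3$, every $D$-operator returns the trivial subgroup (Lemma~\ref{2.16}). So $E_2=1$, while $G/E_2=G$ is non-abelian.
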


Now, thanks to Theorem \ref{can} we can classify all finite groups $G$ that admit an $E$-series of length $r$.

\begin{thm}\label{r}
Let $G$ be a finite group. Suppose that $G$ admits an $E$-series of length at most $r$.
\begin{enumerate}
   \item[(i)] If $r=2$, then $G$ is a  nilpotent group. 
   \item[(ii)] If $r=3$, then $G$ is a Frobenius group.
    \item[(iii)] If $r=4$, then $G$ is a $2$-Frobenius group.
\end{enumerate}
\end{thm}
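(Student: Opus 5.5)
We argue case by case, reading the $E$-series from the definition and combining it with the structural results already proved. Throughout, $|G|=mn$ with $\gcd(m,n)=1$ and $m,n>1$. Length $k+1$ means $E_k(G)=G$. Each step is either the $D$-operator or, if the current quotient is nilpotent, the whole quotient.

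\emph{Case $r=2$.} Here $E_1(G)=G$, and the previous theorem gives that $G$ is nilpotent. The only point is the dichotomy in the definition: if $G$ were not nilpotent, then $E_1(G)=D_{m,n}(G)$, which is proper by Remark \ref{proper}.

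\emph{Case $r=3$.} Assume $G$ is not nilpotent, since otherwise we are in the previous case. Then $E_1=D_{m,n}(G)=:N$, and since $E_2=G$, either $G/N$ is nilpotent or $D_{n,m}(G/N)=G/N$. The latter is impossible by Remark \ref{proper} applied to $G/N$, which has order $m'n$ with $n>1$ because $N$ is a $\pi(m)$-group. Hence $G/N$ is nilpotent. We then want to show that $N$ is a Hall $\pi(m)$-subgroup with a complement acting fixed-point-freely, and conclude with Theorem \ref{nil}.

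\begin{itemize}
\item By Theorem \ref{nil}, for every $1\neq x\in L_n(G)$ the group $N\langle x\rangle$ is Frobenius with kernel $N$. So every nontrivial $n$-element acts fixed-point-freely on $N$.
\item The main obstacle is to show that $G/N$ is an $n$-group, i.e.\ that $N$ contains every Sylow $p$-subgroup for $p\mid m$. Suppose not, and take $x\in L_m(G)\setminus N$. Since $G/N$ is nilpotent, the images of $x$ and of a nontrivial $n$-element $y$ commute modulo $N$.
\item First route: use Proposition \ref{factor}(i), which gives $D_{m,n}(G/N)=1$, and compare with the fact that in the nilpotent quotient $G/N$ the $m$-part commutes with the $n$-part. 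Lemma \ref{2.16} gives no contradiction there directly, so this alone is not enough.
\item Second route: take the $\pi(m)$-part $P$ of $G/N$. Its preimage $\tilde P$ is a normal $\pi(m)$-subgroup of $G$ containing $N$. Choose $y$ that centralizes a nontrivial element $u\in\tilde P$ modulo $N$, and lift using coprime action: $C_{\tilde P/N}(y)$ is the image of $C_{\tilde P}(y)$. This yields an $m$-element commuting with $y$. Derive the contradiction from the Frobenius condition on $N\langle y\rangle$ together with nilpotency of $G/N$.
\end{itemize}

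This step needs care, and it may require reading the statement as "for $G$ not nilpotent". Once it is done, $N$ is a Hall $\pi(m)$-subgroup. Schur–Zassenhaus then gives $G=N\rtimes H$, and by the first bullet $H$ acts fixed-point-freely, so $G$ is Frobenius.

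\emph{Case $r=4$.} Assume $G$ is neither nilpotent nor Frobenius. Then $E_1=D_{m,n}(G)$, $E_2/E_1=D_{n,m}(G/E_1)$, and $E_3=G$ with $G/E_2$ nilpotent; Remark \ref{proper} again rules out $D_{m,n}(G/E_2)=G/E_2$. Theorem \ref{frob}(b) gives that $E_2$ is Frobenius with kernel $E_1$ when $E_2\neq E_1$.
\begin{itemize}
\item Apply the $r=3$ argument to $G/E_1$. It satisfies $D_{m,n}(G/E_1)=1$ by Proposition \ref{factor}(i), so its $E$-series with the roles of $m$ and $n$ swapped is $1\le E_2/E_1\le G/E_1$. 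Hence $G/E_1$ is Frobenius with kernel $E_2/E_1$.
\item With $E_2$ Frobenius with kernel $E_1$, this is exactly the definition of a $2$-Frobenius group with $L=E_2$ and $K=E_1$.
\end{itemize}
The degenerate case $E_2=E_1$ forces $G/E_1$ nilpotent, and the previous argument gives that $G$ is Frobenius. So the classification should be read up to the smaller lengths. Theorem \ref{can} ensures that no longer genuine lengths arise.
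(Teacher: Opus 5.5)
Your reduction of the case $r=3$ to the single claim that $G/N$ is a $\pi(n)$-group, where $N=D_{m,n}(G)$ and $G/N$ is nilpotent, is correct, and you locate the obstacle precisely. But this is a genuine gap, and it cannot be closed, because the claim is false. With it, part (ii) is false, even for non-nilpotent $G$.

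Take $G=S_3\times C_5$, $m=15$, $n=2$. The involutions of $G$ are the elements $(t,1)$ with $t$ a transposition. For $x=(a,b)\in L_{15}(G)=A_3\times C_5$ one gets $o\bigl(x(t,1)\bigr)=\mathrm{lcm}(2,o(b))$, so $D_{15,2}(G)=A_3\times 1$. Then $G/E_1\cong C_{10}$ is nilpotent, so $E_2=G$ and the $E$-series has length $3$. Yet $Z(G)\cong C_5\neq 1$, so $G$ is not a Frobenius group.

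Your second route runs straight into this example. Here $\tilde P=A_3\times C_5$, the involution $y=(t,1)$ centralizes the $m$-element $(1,z)\notin N$, $N\langle y\rangle\cong S_3$ is Frobenius, and $G/N$ is cyclic; nothing is contradicted. The reason is structural. Theorem~\ref{nil} only controls how $n$-elements act on $N$ itself, while nilpotency of $G/N$ forces $n$-elements to centralize the $\pi(m)$-part of $G/N$. So $m$-elements outside $N$ commuting with $n$-elements are fully compatible with everything you have. What does hold is the conditional version: if $G/D_{m,n}(G)$ is a $\pi(n)$-group, then $D_{m,n}(G)$ is a Hall $\pi(m)$-subgroup and Theorem~\ref{nil} makes $G$ Frobenius.

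Your case $r=4$ applies the $r=3$ argument to $G/E_1$, so it inherits the gap. The part giving $E_2$ Frobenius with kernel $E_1$ via Theorem~\ref{frob}(b) is fine. The failure persists at exact length $4$:
\begin{itemize}
\item Let $V=\mathbb{F}_{31}^2$, with $S_3$ acting through its $2$-dimensional irreducible representation and $C_5$ by scalars. Put $G=V\rtimes(S_3\times C_5)$, $m=2\cdot 31^2$, $n=15$.
\item Elements of order $3$, $5$, $15$ act fixed-point-freely on $V$. By Lemma~\ref{2.7}(b) and Corollary~\ref{nil2} this gives $E_1=V=F(G)$.
\item The same computation as above gives $E_2=V\rtimes A_3$, and then $E_3=G$.
\item In a $2$-Frobenius group the lower kernel equals $F(G)$ and $G/F(G)$ is Frobenius, hence has trivial center. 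Here $G/F(G)\cong S_3\times C_5$ does not, so $G$ is not $2$-Frobenius.
\end{itemize}

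The paper's own proof does not supply the missing step either. It writes $E_2/E_1=D_{n,m}(G/E_1)$ and asserts that $G$ is Frobenius once $E_2=G$. But by Remark~\ref{proper} and Definition~\ref{def2}(iv), $E_2=G$ can only come from the nilpotent clause of the definition, which is exactly the case you singled out.

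Separately, your degenerate case $E_2=E_1$ cannot occur in a series reaching $G$. If $G/E_1$ were nilpotent, then $E_2=G$. Otherwise Proposition~\ref{factor}(i) makes the series stall.
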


\begin{proof}
We may assume that $G$ is not a nilpotent group.
Since $G$ has an $E$-series, there exist positive integers $m,n<|G|=mn$ with $\gcd(m,n)=1$ such that 
\[
E_1 = D_{m,n}(G)
\qquad\text{and}\qquad
\dfrac{E_2}{E_1} = D_{n,m}\left(\dfrac{G}{E_1}\right).
\]

If $r = 3$, then $E_2 = G$, and hence $G$ is a Frobenius group.

Now suppose $r = 4$.  
Then the quotient $G/E_1$ is a Frobenius group, and $E_2$ itself is also a Frobenius group.  
Therefore, $G$ is a $2$-Frobenius group.
\end{proof}

The $E$-series therefore provides a unified framework that connects element-order conditions, Frobenius kernels, and the classical Fitting hierarchy. Its alternating construction captures the precise layers at which non-nilpotent behavior arises, leading to the structural classification summarized in Theorem \ref{r}.

%\begin{lem}
 %    Let $G$ be a periodic group of finite exponent $mn$ where $gcd(m,n)=1$ and  $m=rk$ where $gcd(r,k)=1$. 
  %  Let $a\in L_r(G)$ such that $a\not\in D_m(G)$. Then there exists $b\in L_r(G)$ such that $o(ab)\nmid m$.

%\end{lem}
%\begin{proof}
%We proceed by induction on $|G|$.
%If for any $b\in L_r(G)$, we have 
%$o(ab)\mid m$, then  we claim that 
%$\exp(\langle a^G\rangle)\mid m$.

%Let $S$ be a subset of $a^G$  such that  $F:=\langle a^G\rangle=\langle S\rangle$ but $F\neq \langle T\rangle$ for all proper subsets $T$ of $S$.
%Any $w\in F$ is just a finite sequence $w=s_{1}\ldots s_{r}$ whose entries $ s_{1},\ldots ,s_{r}$ are elements of $S\cup S^{-1}$. 

 % Let $y\in F.$ We claim that $o(y)\mid m$.
%We proceed by induction on $|y|$.   The case $|y|=0$ is trivial.
%So suppose that  the result is true for all $a\in F$ with  $|a|<|y|$. There are $s_1,\ldots,s_k\in S$, $\epsilon_i\in\{1,-1\}$ and positive integers $n_1,\ldots,n_k$  such that
 %$y=(s_1)^{\epsilon_1n_1}\ldots(s_k)^{\epsilon_kn_k}$. 
 %We may assume that $n_1>0$.
%By induction hypothesis, we have $o((s_1)^{\epsilon_1(n_1-1)}\ldots(s_k)^{\epsilon_kn_k})\mid m$.   Consequently,
%$o(s_1y)\mid m$, as claimed.

%So $\exp(F)\mid m$. Let $b\in L_m(G)$, and
%let $H=F\langle b\rangle$.
%Since $\exp(H)\mid M$, we have $o(ab)\mid m$, and so
%$a\in D_m(G)$, which is a contradiction.
%\end{proof}


\begin{thebibliography}{99}
\bibitem{Amiri} M. Amiri, I. Kashuba and I. Lima, \emph{On the structure of $LC$-nilpotent groups}. arXiv:2212.03104 [math.GR], 2022. 

%\bibitem{Ba}  R. Baer, {\it Engelsche Elemente Noetherscher Gruppen}, Math. Ann. {\bf 133} (1957), 256--270.

\bibitem{Baumslag} B. Baumslag and J. Wiegold, \emph{A Sufficient Condition for Nilpotency in a Finite
Group}, arXiv:1411.2877 [math.GR], 2020.

%\bibitem{Bel}
%A. Beltrán, R. Lyons, A. Moretó, G. Navarro, A. Sáez
%and P.H. Tiep,
%{\it Order of Products of Elements in Finite Groups}, J. London Math. Soc. {\bf 99(2)} (2019) 535-–552.

%\bibitem{Ber} Y. Berkovich, Groups of Prime Power Order
%Volume 1

%\bibitem{Cip} C. M. Bonciocat, \emph{The order od the product of two elements in finite nilpotent groups}, arXiv:2001.11106 [math. GR] 2020.

%\bibitem{3}Y. Cheng, \textit{Finite groups based on the numbers of elements of maximal
%order} (in Chinese), Chinese Ann. Math. Ser. A, {\bf 14(5)} (1993), 561--567.

%\bibitem{6}G. Chen, W. Shi, \emph{Finite groups with 30 elements of maximal order}, Appl.
%Categor. Struct. {\bf 16} (2008), 239--247.

\bibitem{Frobenius} F.G. Frobenius \textit{ Verallgemeinerung des Sylowschen Satzes}. Berliner Sitz (1895), 981--993.

%\bibitem{Gru} K. W. Gruenberg, \textit{The Engel elements of a soluble group}, Illinois J. Math. {\bf 3} (1959),
%151-–168.

\bibitem{Tiep}
R.M. Guralnick and P.H. Tiep. \textit{Lifting in Frattini covers and a characterization of finite solvable groups}, J. die Reine Angewandte Math. {\bf 708} (2015), 49-–72.

%\bibitem{Hal} P. Hall, \textit{A contribution to the theory of groups of prime-power order}, Proc. London Math. Soc. (2) {\bf 36} (1934), 29--35.

%\bibitem{Her} M. Herzog, P. Longobardi and M. Maj: Properties of finite and periodic groups determined by their element orders
%(A Survey). In: Sastry, N., Yadav, M.(eds) Group Theory and Computation. Indian Statistical Istitute Series. Springer, Singapore (2018), 59--90.

%\bibitem{8}Y. Jiang, \emph{Finite groups with $2p^2$ elements of maximal order are solvable}
%(in Chinese). Chin. Ann. Math. A {\bf 21(1)} (2000), 61--64.

%\bibitem{Kh}
%E. I. Khukhro and P. Shumyatsky, Engel-type subgroups and length parameters of finite groups,
%Israel J. Math. 222 (2017), 599–629

\bibitem{I} I. Martin Isaacs,  {\it Finite Group Theory},
American Mathematical Soc., Vol. 92, (2008).

%\bibitem{Kh} V.D. Mazurov and E.I. Khukhro, {\it The Kourovka Notebook. Unsolved Problems in Group Theory}, 18th ed., Institute of Mathematics, Russian Academy of Sciences, Siberrian Division, Novosibirsk, arXiv:1401.0300v3 [math. GR] 2014.
 
\bibitem{More}A. Moretó and A. Saez, 
\textit{Prime divisors of orders of products}, Proceedings of the Royal Society of Edinburgh Section A: Mathematics, {\bf 149} (2019), 1153--1162.

\bibitem{quat}H. Mousavia,
{\it Finite groups with Quaternion Sylow
subgroup}, Comptes Rendus
Mathématique {\bf 358} (2020), 1097--1099.

%\bibitem{Rob} D.J.S. Robinson, \textit{A course in the theory of groups}, Springer Verlag, New York, (1980).


%\bibitem{Sch} R. Schmidt, \textit{ Subgroup Lattices of Groups}, de Gruyter Expositions in Mathematics \textbf{14}, de Gruyter, Berlin, 1994.

%\bibitem{13} R. Shen, \emph{On groups with given same order types},  Communications in Algebra {\bf 40} (2012), 2140-–2150.

%\bibitem{Su} M. Suzuki, \textit{Group Theory}, I, II, Springer Verlag, Berlin, 1982, 1986.

%\bibitem{Deb} M. T\u{a}rn\u{a}uceanu, \textit{Finite groups determined by an inequality of the order of their elements}, Publ. Math. Debrecen \textbf{80} (2012), 457--463.

%\bibitem{Tru} M. T\u{a}rn\u{a}uceanu, \textit{A Result on Certain Sums of Element Orders in Finite Groups}, Results Math {\bf 80} (2025), 111. 

\bibitem{Thom}J.G. Thompson, \emph{Nonsolvable finite groups all of whose local subgroups are solvable}, Bull.
Amer. Math. Soc. {\bf74} (1968), 383–-437.

%\bibitem{Xu}M. Xu, \textit{The same order typical groups}, Natural Science J.Hannan University
%{\bf 14(2)} (1996), 103--105.

%\bibitem{W} L. Wilson, \textit{On the power structure of powerfull p- groups}, J. Group Theory \textbf{5} (2002), 129-144.

\end{thebibliography}
\end{document}